\theoremstyle{plain}
\newtheorem{thm}{Theorem}[section]
\newtheorem{pro}[thm]{Proposition}
\newtheorem{cor}[thm]{Corollary}
\newtheorem{lem}[thm]{Lemma}
\theoremstyle{definition}
\newtheorem{DEF}[thm]{Definition}
\newtheorem{rem}[thm]{Remark}
\def\rd{\dot R}
\def\ad{\dot A}
\def\rt{R^\times}
\def\rtid{\tilde{R}}
\def\w{\mathcal{W}}
\def\wdt{\dot\w}
\def\v{\mathcal{V}}
\def\ba{\mathfrak{B}}
\def\frm{(\cdot,\cdot)}
\def\Aut{\mathrm{Aut}}
\def\sgn{\mathrm{sgn}}
\def\andd{\quad\mathrm{and}\quad}
\def\Alt{\mathrm{Alt}}
\def\Ker{\mathrm{Ker}}
\def\hti{\mathrm{ht}}
\def\a{\alpha}
\def\b{\beta}
\def\g{\gamma}
\def\ta{\tau}
\def\ep{\epsilon}
\def\sg{\sigma}
\def\vep{\varepsilon}
\def\Lam{\Lambda}
\def\d{\delta}
\begin{document}

\setcounter{page}{1} \setcounter{page}{1}
\title[length function for Weyl groups of type $A_1$]{A Length Function for Weyl Groups of extended affine root systems of Type $A_1$}

\author{ Saeid Azam and Mohammad Nikouei}
\thanks{S. Azam: School of Mathematics, Institute for Research in Fundamental Sciences (IPM), P.O.
Box 19395-5746, Tehran, Iran, and Department of Mathematics, University of Isfahan, P.O. Box
81745-163, Isfahan, Iran (Corresponding Author).\\
Email: azam@sci.ui.ac.ir}
\thanks{M. Nikouei: Department of mathematics, University of Isfahan, P.O.Box. 81745-163, Isfahan, Iran.\\
Email:  nikouei@sci.ui.ac.ir, nikouei2002@yahoo.com}

\subjclass[2000]{20F55; 17B67.}

\keywords{affine reflection system; extended affine root system; length function; Weyl group; extended affine Weyl group}

\begin{abstract}
In this work, we study the concept of the ``length function'' and some of its combinatorial properties for the class of extended affine root systems of type $A_1$. We introduce a notion of root basis for these root systems, and using a unique expression of the elements of the Weyl group with respect to a set of generators for the Weyl group, we calculate the length function with respect to a very specific root basis.
\end{abstract}
\maketitle
\setcounter{section}{-1}
\section{\bf Introduction}\label{introduction}

The combinatorial aspects of Weyl groups have always been one of
the most important parts of the Lie theory. Among all such
aspects, the concept of the ``length'' for a Weyl group element
and its various applications to the whole theory has been of
great interest. In this work, we consider the concept of the
``length function'' and some of its combinatorial properties
for the class of extended affine root systems of type $A_1$.
This is the first attempt in this regard, and we hope our
approach offers a model for studying the same concepts for
extended affine root systems of other types.

For finite and affine cases, the concept of length, with respect to the so called root bases, is crucial in order to show that the corresponding Weyl groups have the Coxeter presentation \cite[Chapter 5 and 16]{C}.
One knows that in these two cases, the set of simple reflections generate the Weyl group, and that the length of a Weyl group element $w$ is characterized by the number of positive roots mapped to negative roots, by $w$.
Finite and affine root systems are in fact extended affine root systems of nullities $0$ and $1$, respectively.

In case of extended affine root systems of nullity greater that 1, there is no such notion of root basis (see \cite[Section 5]{AEAWG}). Also in this case extended affine Weyl groups are not Coxeter groups \cite{HCox}.

In Section \ref{ARS}, using the definition of affine reflection systems
from \cite{AYY}, we recall some essential properties of the
corresponding Weyl groups of type $A_1$ from \cite{AN}. An affine
reflection system is a generalization of an extended affine
root system. In Proposition \ref{baby W}, we show that in contrast to
extended affine Weyl groups, there is exactly one Weyl group
associated to all possible affine reflection systems of type
$A_1$, in the same ground abelian group. Let $R$ be an affine
reflection system of type $A_1$ in an abelian group A, with
Weyl group $\w$. Let $A^0$ be the radical of $A$. We introduce
two maps $\vep:\w\rightarrow \{-1,1\}$ and $T:A^0\rightarrow
A^0$, which will be crucial for the rest of the work. Fixing
a finite root system $\rd=\{0,\pm\ep\}$ of type $A_1$, in
Theorem \ref{uniexpspro}, we show that each element $w\in\w$ has a unique
expression in the form
$$
w=w_\epsilon^{\delta_{\vep(w),1}}w_{\epsilon+T(w)}.$$ In
particular, the Weyl group is isomorphic to the semidirect
product of the finite Weyl group of type $A_1$ and the radical
of the form on $A$.


Section \ref{wlf} is the core of this work. In this section we consider the so called toroidal root basis $\rtid$ of type $A_1$. From definition of $\rtid$ in (\ref{TARS}), any extended affine root system can be considered as a subset of $\rtid$. This section has two parts. In the first part, we define a height function for $\rtid$ (Definition \ref{ht}). We also partition $\rtid$, and so any extended affine root system of type $A_1$, into a positive and a negative part (Definition \ref{PNroot}). Then in Lemma \ref{hipn}, we show that a root is positive (resp. negative) if and only if its height is positive (resp. negative). Next, we introduce a notion of root basis (Definition \ref{RootBasis}) and consider the fundamental root basis, the unique root basis whose height of all its elements is one. The second part of this section is dedicated to the calculation of the length of elements of $\w$ with respect to the fundamental root basis, using a combinatorial approach (see Proposition \ref{reflectionL} and Theorem \ref{LFT}). In this part, we also show that for each $w\in\w$, there exists a root $\a$ in $\rtid$ such that the length of $w$ with respect to the fundamental root basis is equal to the absolute value of the height of $\a$ (see Proposition \ref{reflectionL} and Corollary \ref{W0len}).

Section \ref{rbol} is dedicated to a discussion on the $\w$-orbits of root bases. We show that for nullity greater than 1, the number of $\w$-orbits is  not finite. We also calculate the length of each element of $\w$ with respect to any root basis which is a $\w$-conjugate of the fundamental root basis, see Corollary \ref{wPLen}.

As mentioned before, the concept of length for elements of the finite and affine Weyl groups are known and calculated. In Section \ref{akmc}, we show that the classical length function, for an affine Weyl group of type $A_1$, coincides with the length function described in Section \ref{wlf}. We also show that the classical definition of root basis, in this case, is equivalent to Definition \ref{RootBasis}.

\section{\bf Affine reflection systems of type $A_1$ and their Weyl groups}\setcounter{thm}{0}\label{ARS}
The class of affine reflection systems was first introduced by E. Neher and O. Loos in \cite{LNRS}. Here we use an equivalent definition given in \cite{AYY}. Let $A$ be an abelian group and $\frm:A\times A\longrightarrow\mathbb{Q}$ be a symmetric bi-homomorphism on $A$, where $\mathbb{Q}$ is the field of rational numbers. $\frm$ is called a form on $A$. The subgroup
$$A^0:=\{\a\in A\;|\;(\a,A)=0\}$$
of $A$ is called the radical of the form. Set $A^\times:=A\setminus A^0$, $\;\bar{A}:=A/A^0$ and let $\;\bar{}:A\longrightarrow\bar{A}$ be the canonical map.
The form $\frm$ is called positive definite (positive semidefinite) if $(\a,\a)>0$ $((\a,\a)\geq 0)$ for all $\a\in A\setminus\{0\}$. If $\frm$ is positive semidefinite, then it is easy to see that
$$A^0:=\{\alpha\in A\;|\;(\alpha,\alpha)=0\}.$$
From now on we assume that $\frm$ is a positive semidefinite form on $A$.
For a subset $B$ of $A$, let $B^\times:=B\setminus A^0$ and $B^0:=B\cap A^0$.
For $\alpha,\beta\in A$, if $(\alpha,\alpha)\not= 0$, set $(\beta,\alpha^\vee):=
2(\beta,\alpha)/(\alpha,\alpha)$ and if $(\alpha,\alpha)=0$, set $(\beta,\alpha^\vee):=0$.
A subset $X$ of $A^\times$ is called connected if it cannot be written as a disjoint union of two nonempty
orthogonal subsets. The form $\frm$ induces a unique form on $\bar{A}$ by
$$(\bar{\alpha},\bar{\beta})=(\alpha,\beta)\quad\mathrm{for}\;\alpha,\beta\in A.$$
This form is positive definite on $\bar{A}$. Thus, $\bar{A}$ is a torsion free group.
For a subset $S$ of $A$, we denote by $\langle S\rangle$, the subgroup generated by $S$.
Here is the definition of an affine reflection system from \cite{AYY}.

\begin{DEF}\cite[Definition 1.3]{AYY}\label{ARS def}
Let $A$ be an abelian group equipped with a nontrivial symmetric
positive semidefinite form $\frm$. Let $R$ be a subset of $A$. A subset $R$ of $A$ is called an affine reflection system ({\it ARS}) in $A$, if it satisfies the following 3
axioms:
\begin{itemize}
\item[(R1)] $R=-R$,
\item[(R2)] $\langle R\rangle=A$,
\item[(R3)] for $\alpha\in\rt$ and $\beta\in R$, there exist $d, u\in\mathbb{Z}_{\geq0}$ such that
$$(\beta+\mathbb{Z}\alpha)\cap R=\{\beta-d\alpha,\dots,\beta+u\alpha\}\quad\mathrm{and}\quad d-u=(\beta,\alpha^\vee).$$
\end{itemize}
The affine reflection system $R$ is called {\it irreducible} if it
satisfies
\begin{itemize}
\item[(R4)] $\rt$ is connected.
\end{itemize}
Moreover, $R$ is called {\it tame} if
\begin{itemize}
\item[(R5)] $R^0\subseteq\rt-\rt$ (elements of $R^0$ are non-isolated).
\end{itemize}
Finally $R$ is called {\it reduced} if it satisfies
\begin{itemize}
\item[(R6)] $\alpha\in\rt\Rightarrow 2\alpha\not\in\rt$.
\end{itemize}
Elements of $\rt$ (resp. $R^0$) are called {\it non-isotropic
roots} (resp. {\it isotropic roots}).
An affine reflection system $R$ in $A$ is called a {\it locally finite root system} if $A^0=\{0\}$.
\end{DEF}

Let $R$ be an affine reflection system. Since the form is nontrivial, $A\not=A^0$. The image of $R$ under $\bar{\;}$ is shown by $\bar{R}$. By \cite[Corollary 1.9]{AYY}, $\bar{R}$ in $\bar{A}$ is a locally finite root system. The {\it type} and the rank of $R$ are defined to be the type and the rank of $\bar{R}$, respectively.

Since this work is devoted to the study of Weyl groups of affine reflection systems of type $A_1$, {\it for the rest we assume that $R$ is a tame irreducible affine reflection system of type $A_1$}. By \cite[Theorem 1.13]{AYY}, $R$ contains a finite root system $\rd=\{0,\pm\ep\}$ of type $A_1$ and a subset $S\subseteq R^0$, such that
\begin{equation}\label{type-A_1}
R=(S+S)\cup(\rd^\times+S),
\end{equation}
where $S$ is a {\it pointed reflection subspace of} $A^0$, in the sense that it satisfies,
\begin{equation*}
0\in S,\quad S\pm 2S\subseteq S,\quad\mathrm{and}\quad\langle S\rangle=A^0.
\end{equation*}
According to \cite[Theorem 1.13]{AYY}, any tame irreducible affine reflection system of type $A_1$ arises in this way. In particular
\begin{equation}\label{TARS}
\rtid:=A^0\cup(\rd+A^0)
\end{equation}
is an affine reflection system in $A$. It follows easily that $\rtid$ contains any affine reflection system of type $A_1$ in $A$.

Let $\Aut(A)$ denote the group of automorphisms of $A$. For $\a\in A$, one defines $w_\a\in\Aut(A)$ by
$$w_\a(\b)=\b-(\b,\a^\vee)\a,$$ for $\beta\in A$. We call $w_\a$ the reflection based on $\a$, since it sends $\a$ to $-\a$ and
fixes pointwise the subgroup $\{\b\in A\mid (\b,\a)=0\}$ of $A$. Note that if $\a\in A^0$, then according to our convention,
$(\b,\a^\vee)=0$ for all $\b$ and so $w_\a=\hbox{id}_A$, where $\hbox{id}_A$ is the identity map on $A$.
For a subset $B$ of $R$, the subgroup of $\Aut(A)$ generated by $w_\a$, $\a\in B$, is denoted by $\w_B$. The group $\w_R$ is called the Weyl group of $R$.


Using axiom (R3) of \ref{ARS def}, we have $w(R)\subseteq R$, for $w\in\w_R$.
One can easily conclude that for $w\in\w_R$ and $\a,\b\in A$,
$$(w\a,w\b)=(\a,\b).$$
In turn this leads us to the fact that for $\a\in R$ and $w\in\w_R$,
\begin{equation}\label{conj}
ww_\a w^{-1}=w_{w(\a)}.
\end{equation}

Without loss of generality, we may assume that
\begin{equation*}
(\epsilon,\epsilon)=2.
\end{equation*}
Let $\dot A:=\langle\rd\rangle$. Then $A=\dot{A}\oplus A^0$, we also have $\langle R^0\rangle=\langle S\rangle=A^0$. Let
$p:A\rightarrow A^0$, be the projections onto $A^0$.

For each $\alpha\in A$, we have
$$\alpha=\sgn(\alpha)\epsilon+p(\alpha),$$
where $\sgn:A\longrightarrow{\mathbb Z}$ is a group epimorphism. Clearly, each $\a\in A$ is uniquely
determined by its images under the maps $p$ and $\sgn$.
Since the image of $p$ is contained in the radical of the form, the form $\frm$ is uniquely determined by the map $\sgn$, namely
\begin{equation*}
(\beta,\alpha^\vee)=(\beta,\alpha)=2\sgn(\beta)\sgn(\alpha),
\end{equation*}
for all $\a,\b\in A$.
Here we recall some results from \cite{AN}.

\begin{lem}\cite[Lemma 2.3]{AN}\label{action lem}
Let $w:=w_{\alpha_1}\cdots w_{\alpha_t}\in\w_R$, $\a_i\in R^\times$.
Then for $\beta\in R$, we have
$$\sgn(w\b)=(-1)^t\sgn(\beta)\andd p(w\b)=p\big(\beta-2(-1)^t\sgn(\b)\sum_{i=1}^t(-1)^i\sgn(\alpha_i){\alpha_i}\big).$$
\end{lem}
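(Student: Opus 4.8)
The plan is to prove both identities simultaneously by induction on $t$, the number of reflections, the whole argument resting on two facts already in hand. First, the explicit action of a single reflection: for $\a\in\rt$ and $\b\in R$, the formula $(\b,\a^\vee)=2\sgn(\b)\sgn(\a)$ recorded above gives $$w_\a(\b)=\b-(\b,\a^\vee)\a=\b-2\sgn(\b)\sgn(\a)\a.$$ Second, both $\sgn$ and the projection $p$ are group homomorphisms, hence additive on any $\mathbb{Z}$-linear combination of elements of $A$; this lets me apply them termwise to such expressions.

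For the base case $t=1$ I would apply $\sgn$ to the displayed identity, obtaining $\sgn(w_\a\b)=\sgn(\b)\big(1-2\sgn(\a)^2\big)$. The only nonformal input is that every non-isotropic root satisfies $\sgn(\a)=\pm1$: this is immediate from the type $A_1$ description (\ref{type-A_1}), which yields $\rt=\rd^\times+S$, so that $\a=\pm\ep+s$ with $s\in A^0$ and therefore $\sgn(\a)=\pm1$. Hence $\sgn(\a)^2=1$ and $\sgn(w_\a\b)=-\sgn(\b)=(-1)^1\sgn(\b)$. Applying $p$ to the same identity gives $p(w_\a\b)=p(\b)-2\sgn(\b)\sgn(\a)p(\a)$, which is precisely the asserted $p$-formula at $t=1$.

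For the inductive step I would factor $w=w'w_{\a_t}$ with $w'=w_{\a_1}\cdots w_{\a_{t-1}}$, put $\g:=w_{\a_t}(\b)\in R$ (reflections preserve $R$), and apply the induction hypothesis for $w'$ to $\g$. From the base case $\sgn(\g)=-\sgn(\b)$ and $p(\g)=p(\b)-2\sgn(\b)\sgn(\a_t)p(\a_t)$. The sign statement then follows at once: $\sgn(w\b)=\sgn(w'\g)=(-1)^{t-1}\sgn(\g)=(-1)^t\sgn(\b)$. For the projection, substituting $\sgn(\g)=-\sgn(\b)$ turns the prefactor $(-1)^{t-1}\sgn(\g)$ of the hypothesis into $(-1)^t\sgn(\b)$, producing the partial sum over $i=1,\dots,t-1$ with the correct sign; the remaining piece $p(\g)-p(\b)=-2\sgn(\b)\sgn(\a_t)p(\a_t)$ must then supply the missing $i=t$ term. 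Checking this is pure bookkeeping: since $(-1)^t(-1)^t=1$, the target $i=t$ contribution $-2(-1)^t\sgn(\b)(-1)^t\sgn(\a_t)p(\a_t)$ collapses to exactly $-2\sgn(\b)\sgn(\a_t)p(\a_t)$, matching the leftover.

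I expect no genuine obstacle: this is a sign-tracking induction rather than a structural result. The one point I would isolate beforehand, to keep the induction transparent, is the reduction $\sgn(\a)^2=1$ for $\a\in\rt$, since without a clean sign flip at each reflection neither formula would telescope. The only real risk is an arithmetic slip in the step where the factor $-1$ coming from $\sgn(\g)=-\sgn(\b)$ is absorbed into the running summation, so I would carry the alternating signs $(-1)^i$ explicitly throughout.
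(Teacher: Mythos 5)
Your induction is correct, and there is in fact no in-paper proof to compare it against: Lemma \ref{action lem} is imported verbatim from \cite{AN} (Lemma 2.3), so your argument serves as an independent verification rather than a rival to a proof in this text. All the genuinely needed inputs are in place: $w_\a(\b)=\b-2\sgn(\b)\sgn(\a)\a$ from $(\b,\a^\vee)=2\sgn(\b)\sgn(\a)$; the reduction $\sgn(\a)^2=1$ for $\a\in\rt$, which you correctly ground in (\ref{type-A_1}) since $R^\times=\pm\ep+S$; additivity of $\sgn$ and $p$; and $w_{\a_t}(\b)\in R$ so the inductive hypothesis applies. The sign bookkeeping in the inductive step also checks out: $(-1)^{t-1}\sgn(\g)=(-1)^{t}\sgn(\b)$ converts the prefactor correctly, and the leftover $p(\g)-p(\b)=-2\sgn(\b)\sgn(\a_t)p(\a_t)$ equals the missing $i=t$ term because $(-1)^t(-1)^t=1$, exactly as you computed.
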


\begin{pro}\cite[Proposition 2.6]{AN}\label{W relations}
For $\a_1,\ldots,\a_n\in R^\times$, we have $w_{\a_1}\cdots w_{\a_n}=1$ in $\w_R$, if and
only if $n$ is even and
$$\sum_{i=1}^n(-1)^i\sgn(\a_i)p(\a_i)=0.$$
In particular, if $n$ is odd, then $w^2=1$.
\end{pro}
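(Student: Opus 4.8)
The plan is to reduce the equation $w_{\alpha_1}\cdots w_{\alpha_n}=1$ to conditions on the two invariants $\sgn$ and $p$, and then read off both directions from Lemma \ref{action lem}. Since $\langle R\rangle=A$ by (R2) and each $w_{\alpha_i}$ is a group automorphism of $A$, the element $w:=w_{\alpha_1}\cdots w_{\alpha_n}$ equals the identity if and only if $w\beta=\beta$ for every $\beta\in R$. Moreover, every element of $A$ is uniquely determined by its images under $\sgn$ and $p$, so $w\beta=\beta$ is equivalent to the pair of equalities $\sgn(w\beta)=\sgn(\beta)$ and $p(w\beta)=p(\beta)$. Thus I would restate $w=1$ as the requirement that both equalities hold for all $\beta\in R$.

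Next I would feed these into Lemma \ref{action lem}. The sign identity $\sgn(w\beta)=(-1)^n\sgn(\beta)$ forces the first equality to read $(-1)^n\sgn(\beta)=\sgn(\beta)$; testing this on $\beta=\epsilon$, where $\sgn(\epsilon)=1$, yields $(-1)^n=1$, i.e.\ $n$ is even, and conversely when $n$ is even the sign equality holds for every $\beta\in R$ since $\sgn$ takes only the values $0,\pm1$ on $R$. For the $p$-part, I would use that $p$ is $\mathbb{Z}$-linear to rewrite the formula of Lemma \ref{action lem} as $p(w\beta)=p(\beta)-2(-1)^n\sgn(\beta)\sum_{i=1}^n(-1)^i\sgn(\alpha_i)p(\alpha_i)$. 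Assuming $n$ even, the condition $p(w\beta)=p(\beta)$ for all $\beta\in R$ collapses, upon testing on $\beta=\epsilon$, to $2\sum_{i=1}^n(-1)^i\sgn(\alpha_i)p(\alpha_i)=0$. This would establish both implications of the equivalence at once: if $w=1$ then $n$ is even and the doubled sum vanishes, while if $n$ is even and the sum vanishes then $\sgn(w\beta)=\sgn(\beta)$ and $p(w\beta)=p(\beta)$ for every $\beta\in R$, whence $w=1$.

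The main obstacle I anticipate is the spurious factor of $2$: since $\sgn(\beta)$ never exceeds $1$ in absolute value on $R$, Lemma \ref{action lem} only ever delivers $2\sum_{i=1}^n(-1)^i\sgn(\alpha_i)p(\alpha_i)=0$ rather than the clean vanishing of the sum itself. To remove the factor of $2$ I would invoke that $A$ is torsion-free (as is standard in the affine reflection system setting; note $\dot A\cong\mathbb{Z}$ and the decomposition $A=\dot A\oplus A^0$), so that $2x=0$ implies $x=0$; this upgrades the doubled relation to $\sum_{i=1}^n(-1)^i\sgn(\alpha_i)p(\alpha_i)=0$, which is the asserted condition.

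Finally, the ``in particular'' statement should follow purely formally from the equivalence just proved. For odd $n$, I would write $w^2$ as the word $w_{\alpha_1}\cdots w_{\alpha_n}w_{\alpha_1}\cdots w_{\alpha_n}$ of even length $2n$ and compute its associated sum $\sum_{j=1}^{2n}(-1)^j\sgn(\gamma_j)p(\gamma_j)$ for the concatenated word $\gamma$. Splitting this over the two copies and reindexing the second by $j=n+k$, the shift multiplies each sign $(-1)^j$ by $(-1)^n=-1$, so the second half is exactly the negative of the first and the two cancel. Applying the main equivalence to this even-length product then gives $w^2=1$.
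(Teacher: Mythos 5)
The paper does not reprove this proposition---it is quoted from \cite[Proposition 2.6]{AN}---so your argument can only be measured against the evident intended derivation from Lemma \ref{action lem}, and on that score your route is exactly the natural one. Reducing $w=1$ to $w\b=\b$ for all $\b\in R$ via (R2), splitting that condition into its $\sgn$- and $p$-components (which jointly determine an element of $A=\dot A\oplus A^0$), and testing on $\b=\ep$ (legitimate, since $0\in S$ gives $\ep\in R$) yields both implications correctly. Your handling of the ``in particular'' clause is also right: concatenating the odd word with itself shifts the indices of the second copy by the odd number $n$, which flips every sign $(-1)^j$ there, so the two alternating sums cancel and the main equivalence applies to the even word of length $2n$.

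The one step you should not wave through is the removal of the factor $2$. Your parenthetical justification establishes only that $\dot A\cong\mathbb{Z}$; it says nothing about $A^0$, and in the generality in which Section \ref{ARS} is written ($A$ an arbitrary abelian group with a positive semidefinite $\mathbb{Q}$-valued form) the radical $A^0$ may well contain $2$-torsion: any torsion element of $A$ is automatically orthogonal to everything, hence lies in $A^0$. In that case the proposition itself fails, not merely your proof of it. Indeed, take $A=\mathbb{Z}\ep\oplus\mathbb{Z}/2\mathbb{Z}$ with $\sg$ the element of order $2$ and $R=A^0\cup(\pm\ep+A^0)$, which is still a tame irreducible ARS of type $A_1$; a direct computation gives $w_{\ep+\sg}w_\ep(\b)=\b+2\sgn(\b)\sg=\b$ for every $\b$, so $w_{\ep+\sg}w_\ep=1$, while the alternating sum for this word is $-p(\ep+\sg)=\sg\neq0$. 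So the honest fix is not to assert torsion-freeness as ``standard'' but to record $2$-torsion-freeness of $A^0$ as a hypothesis inherited from the setting of \cite{AN} (the present paper leans on the same implicit assumption, e.g.\ in the division by $2$ used to show $T$ is well defined); it holds automatically from Section \ref{wlf} onward, where $A$ is assumed free of rank $\nu+1$. With that hypothesis made explicit, your proof is complete.
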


\begin{cor}\cite[Corollary 2.8]{AN}\label{action cor}
For $\a,\b,\g\in R$, we have
$$(w_\a w_\b w_\g)^2=1.$$
In particular, for $\alpha_1,\dots,\alpha_t\in\rt$ we have
$$w_{\alpha_1}\cdots w_{\alpha_t}=w_{\alpha_1}\cdots w_{\alpha_{i-1}}w_{\alpha_{i+2}}w_{\alpha_{i+1}}w_{\alpha_i}w_{\alpha_{i+3}}\cdots w_{\alpha_t}.$$
\end{cor}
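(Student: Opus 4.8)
The plan is to read both assertions off Proposition \ref{W relations}. For the first identity set $w := w_\alpha w_\beta w_\gamma$. Taking $\alpha,\beta,\gamma\in\rt$ (an isotropic root contributes $w_\cdot=\mathrm{id}_A$ and may simply be dropped, so that the substantive case is the non-isotropic one used later), $w$ is a product of an \emph{odd} number, namely three, of reflections based on elements of $\rt$. The final clause of Proposition \ref{W relations} then gives $w^2=1$ immediately.

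If I instead want the mechanism in plain sight, I would expand
$$(w_\alpha w_\beta w_\gamma)^2=w_\alpha w_\beta w_\gamma w_\alpha w_\beta w_\gamma,$$
a product of $n=6$ reflections, and test the criterion of Proposition \ref{W relations}. Labelling the factors $\alpha_1,\dots,\alpha_6=\alpha,\beta,\gamma,\alpha,\beta,\gamma$, the governing sum $\sum_{i=1}^{6}(-1)^i\sgn(\alpha_i)p(\alpha_i)$ splits into the blocks $i\in\{1,2,3\}$ and $i\in\{4,5,6\}$; because the parity $(-1)^i$ is reversed between the two blocks while the roots repeat, the two contributions are exactly opposite and cancel to $0$. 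Since $6$ is even, Proposition \ref{W relations} yields $(w_\alpha w_\beta w_\gamma)^2=1$. The same cancellation can be seen through Lemma \ref{action lem}: with $t=6$ one gets $\sgn(w\delta)=\sgn(\delta)$ and, from the telescoping of $\sum_{i=1}^{6}(-1)^i\sgn(\alpha_i)\alpha_i=0$, also $p(w\delta)=p(\delta)$ for every $\delta$, whence $w=\mathrm{id}_A$.

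For the ``in particular'' statement I would apply the first identity to the consecutive triple $\alpha=\alpha_i$, $\beta=\alpha_{i+1}$, $\gamma=\alpha_{i+2}$. As each reflection is an involution, $(w_{\alpha_i}w_{\alpha_{i+1}}w_{\alpha_{i+2}})^2=1$ says precisely that this element equals its own inverse, i.e.
$$w_{\alpha_i}w_{\alpha_{i+1}}w_{\alpha_{i+2}}=w_{\alpha_{i+2}}w_{\alpha_{i+1}}w_{\alpha_i}.$$
Substituting this reversed block into $w_{\alpha_1}\cdots w_{\alpha_t}$, leaving the prefix $w_{\alpha_1}\cdots w_{\alpha_{i-1}}$ and the suffix $w_{\alpha_{i+3}}\cdots w_{\alpha_t}$ untouched, gives the asserted equality.

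The computation is routine, and I do not expect a serious obstacle. The only points demanding care are the sign bookkeeping in the alternating sum — keeping the parity $(-1)^i$ aligned with the cyclic ordering of $\alpha,\beta,\gamma$ — and the treatment of isotropic roots, where the convention $w_\alpha=\mathrm{id}_A$ means the phrase ``product of three reflections'' should be understood up to deletion of identity factors. Once the telescoping inside Proposition \ref{W relations} is arranged, both claims follow in a line each.
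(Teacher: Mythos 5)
Your main line is correct and is exactly the intended derivation. The paper itself prints no proof (the corollary is quoted from \cite[Corollary 2.8]{AN}), and Proposition \ref{W relations} is its advertised source; for $\a,\b,\g\in\rt$ both of your arguments go through: the one-line appeal to the final clause of Proposition \ref{W relations} with $n=3$ odd, and the explicit $n=6$ computation, where the alternating sum over the two repeated blocks cancels because $(-1)^{i+3}=-(-1)^i$. Your deduction of the ``in particular'' clause is also fine: each $w_{\a_j}$ is an involution, so $(w_{\a_i}w_{\a_{i+1}}w_{\a_{i+2}})^2=1$ says the triple equals its reversal $w_{\a_{i+2}}w_{\a_{i+1}}w_{\a_i}$, and substituting that block into $w_{\a_1}\cdots w_{\a_t}$ gives the displayed identity.

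The genuine flaw is the parenthetical claim that an isotropic root ``may simply be dropped,'' reducing the stated case $\a,\b,\g\in R$ to the non-isotropic one. Dropping an identity factor flips the parity: a product of three reflections becomes a product of two, the odd-$n$ clause of Proposition \ref{W relations} no longer applies, and the even case now \emph{requires} the alternating condition $\sgn(\a)p(\a)=\sgn(\b)p(\b)$, i.e. $\b=\pm\a$, which generally fails. Concretely, take $0\neq\sg\in A^0$ with $2\sg\neq0$ and set $\a=\ep$, $\b=\ep+\sg$, $\g=\sg\in\rtid^0$; then $w_\g=\mathrm{id}_A$ and $(w_\a w_\b w_\g)^2=(w_\ep w_{\ep+\sg})^2$, which satisfies $T\bigl((w_\ep w_{\ep+\sg})^2\bigr)=2\sg\neq0$ (Lemma \ref{antihom}, or Proposition \ref{W relations} with $n=4$), so by (\ref{waction}) it sends $\ep$ to $\ep-4\sg\neq\ep$. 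Thus the first identity is actually \emph{false} when exactly one of the three roots is isotropic (unless $\b=\pm\a$), though trivially true when at least two are. The corollary as printed, ``for $\a,\b,\g\in R$,'' overstates; the correct hypothesis, and the only one used anywhere later (the ``in particular'' clause and the proof of Theorem \ref{LFT}), is $\a,\b,\g\in\rt$. Your proof of that substantive case is complete, but you should delete the ``dropped'' remark — it is not a harmless normalization but an assertion of the false case — and instead note that the statement must be read with $\rt$ in place of $R$.
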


The following definition is suggested by Proposition \ref{W relations}.

\begin{DEF}\cite[Definition 2.7]{AN}\label{semi-relation}
Let $P$ be a subset of $R$. We call a  $k$-tuple $(\a_1,\ldots,\a_k)\in P$, an \it{alternating $k$-tuple in $P$} if $k$ is even
and $\sum_{j=1}^k(-1)^jsgn(\alpha_j)p(\alpha_j)=0$. We denote by $\Alt(P)$, the set of all alternating $k$-tuples in $P$ for all $k$.

By Proposition \ref{W relations}, if $(\a_1,\ldots,\a_k)\in\Alt(P)$, then $w_{\a_1}\cdots w_{\a_k}=1$ in $\w$. 
\end{DEF}

The following proposition is a generalization of \cite[Proposition 4.4]{AN} for $\rtid=A^0\cup(\pm\ep+A^0)$.

\begin{pro}\label{baby W}
We have
\begin{itemize}
\item[(i)] $w_{\alpha+\sigma+\delta}=w_{\alpha+\sigma}w_\alpha
w_{\alpha+\delta}$, for $\alpha\in\rtid$ and $\sigma,\delta\in
\rtid^0$.
\item[(ii)]
$w_{\alpha+k\sigma}w_\alpha=(w_{\alpha+\sigma}w_\alpha)^k$, $k\in\mathbb{Z}$, $\alpha\in\rtid$ and $\sigma\in\rtid^0$.
\item[(iii)]
If $\Pi^0$ is any generating subset of $A^0$ and $\a\in\rtid^\times$, then $w_\b\in\w_{\pm\a+\Pi^0}$, for any $\b\in\rtid^\times=\pm\ep+A^0$.
\item[(iv)] $\w_R=\w_{\rtid}$, for any ARS $R$ in $A$.
\end{itemize}
\end{pro}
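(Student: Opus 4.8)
The plan is to establish the four parts in sequence, each feeding the next: (i) and (ii) reduce to Proposition \ref{W relations}, (iii) is an inductive generation argument built on (i) and (ii), and (iv) is an immediate corollary of (iii). For (i), I would first dispose of the case $\alpha\in\rtid^0=A^0$, where $w_\alpha=w_{\alpha+\sigma}=w_{\alpha+\delta}=w_{\alpha+\sigma+\delta}=\mathrm{id}_A$ and both sides are trivial. Assuming then $\alpha\in\rtid^\times$ with $s:=\sgn(\alpha)\in\{\pm1\}$, every entry of the tuple $(\alpha+\sigma,\alpha,\alpha+\delta,\alpha+\sigma+\delta)$ is non-isotropic of sign $s$. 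Since $w_{\alpha+\sigma+\delta}$ is an involution, the identity is equivalent to $w_{\alpha+\sigma}w_\alpha w_{\alpha+\delta}w_{\alpha+\sigma+\delta}=1$, which by Proposition \ref{W relations} amounts to checking that this $4$-tuple is alternating; using $p(\sigma)=\sigma$ and $p(\delta)=\delta$, the sum $\sum_{i=1}^4(-1)^i\sgn(\alpha_i)p(\alpha_i)=s\big(-(p(\alpha)+\sigma)+p(\alpha)-(p(\alpha)+\delta)+(p(\alpha)+\sigma+\delta)\big)$ collapses to $0$.

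Part (ii) I would prove by induction on $k$, the cases $k=0,1$ being immediate. For the inductive step, expanding $(w_{\alpha+\sigma}w_\alpha)^{k+1}=w_{\alpha+\sigma}w_\alpha(w_{\alpha+\sigma}w_\alpha)^{k}=w_{\alpha+\sigma}w_\alpha w_{\alpha+k\sigma}w_\alpha$ and invoking (i) with $\delta$ replaced by $k\sigma$, namely $w_{\alpha+\sigma}w_\alpha w_{\alpha+k\sigma}=w_{\alpha+(k+1)\sigma}$, yields $(w_{\alpha+\sigma}w_\alpha)^{k+1}=w_{\alpha+(k+1)\sigma}w_\alpha$. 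Negative exponents follow by applying the positive case to $-\sigma$ together with the relation $w_{\alpha-\sigma}w_\alpha=(w_{\alpha+\sigma}w_\alpha)^{-1}$, which I would verify from the conjugation formula (\ref{conj}) and $w_\alpha(\sigma)=\sigma$.

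For (iii), set $B:=\pm\alpha+\Pi^0$ and note first that $w_\alpha\in\w_B$: because $0\in\Pi^0$ we have $\alpha\in B$, so $w_\alpha$ is one of the defining generators — this is the single point at which the presence of $0$ in the generating set is used. Each $w_{\alpha+\pi}$ ($\pi\in\Pi^0$) is a generator of $\w_B$, whence (ii) gives $w_{\alpha+k\pi}=(w_{\alpha+\pi}w_\alpha)^{k}w_\alpha\in\w_B$ for all $k\in\mathbb{Z}$. Writing an arbitrary $\tau\in\langle\Pi^0\rangle=A^0$ as a finite sum $\tau=\sum_j k_j\pi_j$ and inducting on the number of terms, with (i) in the form $w_{\alpha+\tau'+\delta'}=w_{\alpha+\tau'}w_\alpha w_{\alpha+\delta'}$ at each step, shows $w_{\alpha+\tau}\in\w_B$ for every $\tau\in A^0$. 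Finally, for arbitrary $\beta\in\rtid^\times=\pm\ep+A^0$, exactly one of $\beta,-\beta$ has sign $\sgn(\alpha)$, say $\beta'$, and then $\beta'-\alpha\in A^0$ gives $w_\beta=w_{\beta'}=w_{\alpha+(\beta'-\alpha)}\in\w_B$.

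Part (iv) is then immediate: $\rtid$ contains every ARS $R$ of type $A_1$ in $A$, so $\w_R\subseteq\w_{\rtid}$, while by (\ref{type-A_1}) one has $R^\times=\rd^\times+S=\pm\ep+S$ with $S$ a pointed reflection subspace satisfying $0\in S$ and $\langle S\rangle=A^0$; applying (iii) with $\Pi^0=S$ and $\alpha=\ep$ gives $\pm\alpha+\Pi^0=R^\times$, so $w_\beta\in\w_{R^\times}=\w_R$ for all $\beta\in\rtid^\times$ and hence $\w_{\rtid}\subseteq\w_R$. I expect the main obstacle to be (iii): the real work is to see that the reflections along the whole isotropic fibre $\alpha+A^0$ are generated by those along $\alpha+\Pi^0$ together with $w_\alpha$, for which (i) and (ii) are precisely the required relations, and to keep track of the role of $0\in\Pi^0$, without which — as the nullity-one (affine) case already illustrates — only reflections based on roots in a proper coset of $\langle\Pi^0\rangle$ would be produced.
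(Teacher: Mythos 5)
Your argument is correct and, for parts (iii) and (iv), is essentially the paper's own proof: the paper likewise writes $\b=\a+(\sg-p(\a))$, expands $\sg-p(\a)=\sum_\tau n_\tau\tau$ over $\Pi^0$, and reduces everything to (i) and (ii). For (i) and (ii) the paper simply defers to \cite[Proposition 4.1 (i)--(ii)]{AN}, whereas you supply the verification directly: the alternating $4$-tuple $(\a+\sg,\a,\a+\d,\a+\sg+\d)$ fed into Proposition \ref{W relations}, and the induction on $k$ with $w_{\a-\sg}w_\a=(w_{\a+\sg}w_\a)^{-1}$ via (\ref{conj}). These are the natural arguments and use exactly the machinery the paper develops, so there is no methodological divergence.

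The one point worth recording is your treatment of $0\in\Pi^0$. This hypothesis is not in the printed statement of (iii) (``any generating subset of $A^0$''), but you are right that some such assumption is needed, and the paper's own proof tacitly makes it: its closing expression $w_\b=\bigl(\prod_\tau(w_{\a+\tau}w_\a)^{n_\tau}\bigr)w_\a$ contains the factor $w_\a$, which is a reflection based on an element of $\pm\a+\Pi^0$ only when $\a\in\pm\a+\Pi^0$, i.e.\ only when $0\in\Pi^0$. Your nullity-one remark is in fact a genuine counterexample to the literal statement: for $\nu=1$, $\Pi^0=\{\sg_1\}$ and $\a=\ep$, the generators of $\w_{\pm\ep+\Pi^0}$ are $w_{\ep+\sg_1}$ and $w_{-\ep+\sg_1}=w_{\ep-\sg_1}$, and any odd-length word in these has $T$-value an odd multiple of $\sg_1$, so $w_\ep$ (with $T(w_\ep)=0$) is not obtained. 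Since the application in (iv) takes $\Pi^0=S$ with $S$ a \emph{pointed} reflection subspace ($0\in S$), nothing downstream is affected; your patch is a correct sharpening rather than a gap in your own argument. One last trifle in your (iv): you normalize $\a=\ep$ via (\ref{type-A_1}), which presumes the standing assumption $\ep\in R^\times$; for a truly arbitrary ARS $R$ in $A$ the paper instead writes $R^\times=\pm\a+S$ for some $\a\in\rtid^\times$ and applies (iii) with that $\a$ --- your proof of (iii), which handles general $\a$, covers this verbatim.
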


\begin{proof}
(i)-(ii) The proof is the same as the proof of \cite[Proposition 4.1 (i)-(ii)]{AN}.

(iii) The proof is essentially the same as the proof of \cite[Proposition 4.1 (iii)]{AN}, however for the convenience of the reader we provide the details here. Let $\b$ be an arbitrary element of $\pm\ep+ A^0$. Then $\b=k\ep+\sg$, for $k\in\{\pm1\}$ and $\sigma\in A^0$. Without loss of generality, we assume that $k=\sgn(\a)$. Then we have $\b=\a+\sg-p(\a)$. Let
$$\sg-p(\a)=\sum_{\tau\in\Pi^0}n_\tau\tau,$$
where $n_\tau\in\mathbb{Z}$ and $n_\tau=0$ for all but a finite number of $\tau\in\Pi_0$. From (i) we have
$$w_\b w_\a=\prod_{\tau\in\Pi^0}w_{\a+n_\tau\tau}w_\a.$$
Now for each $\tau\in\Pi^0$, from (ii) we have $$w_{\a+n_\tau\tau}w_\a=(w_{\a+\tau}w_\a)^{n_\tau}.$$ This way we obtain an expression of $w_\b$ with respect to the reflections based on elements of $\pm\a+\Pi^0$. This means that $w_\b\in\w_{\pm\a+\Pi^0}$.

(iv) Let $R$ be an ARS of type $A_1$ in $A$. Then $R^\times=\pm\a+S$ for a pointed reflection subspace $S$ in $A^0$ and $\a\in\rtid^\times$. Since $S$ generates $A^0$, by (iii), we have
$$\w_R=\w_{R^\times}=\w_{\rtid}.$$
\end{proof}

Suggested by the proof of Proposition \ref{baby W} (iv), we have the following definition.

\begin{DEF}\label{A1WG}
$\w:=\w_{\rtid}$ is called the \textit{$A_1$-type Weyl group} on $(A,\frm)$. If there is no confusion about $\frm$, we simply call $\w$ the $A_1$ -type Weyl group on $A$.
\end{DEF}


In this part we define two maps on $\w$. These maps are crucial for the rest of this work. For $w\in\w=\langle w_\a\;|\;\a\in\rtid^\times\rangle$, suppose that $w_{\a_1}\cdots w_{\a_n}$ is an expression of $w$ with respect to $\rtid^\times$, i.e., $\a_i\in\rtid^\times$, for $1\leq i\leq n$.
We define $\vep(w)=(-1)^n$. If $w=w_{\b_1}\cdots w_{\b_m}$ is another expression of $w$ with respect to $\rtid^\times$, then from Lemma \ref{action lem} we have
$$(-1)^msgn(\g)=sgn(w\g)=(-1)^nsgn(\g),$$
for any $\g\in\rtid^\times$. Thus the map $\vep$ from $\w$ to the multiplicative group $\{-1,1\}$ is well defined.
For $w,w'\in\w$, let $w=w_{\a_1}\cdots w_{\a_n}$ and $w'=w_{\a'_1}\cdots w_{\a'_{n'}}$, where $\a_i,\a_j'\in\rtid^\times$ for $1\leq i\leq n$ and $1\leq j\leq m$. We have
$$\vep(ww')=(-1)^{n+n'}=\vep(w)\vep(w').$$
So the map $\vep:\w\longrightarrow\{-1,1\}$ is a group homomorphism. For $w\in\w$, we call $\vep(w)$ the sign of $w$. In this section, we call $w\in\w$ an even (odd) element, if $\vep(w)=1$ ($\vep(w)=-1$).

Again for $w\in\w$, suppose that $w_{\a_1}\cdots w_{\a_n}$ is an expression of $w$ with respect to $\rtid^\times$. We define
\begin{equation}\label{zpartexp}
\ta_w:=\sum_{i=1}^n(-1)^{n-i}\sgn(\a_i)p(\a_i)=\vep(w)\sum_{i=1}^n(-1)^i\sgn(\a_i)p(\a_i)\in A^0.
\end{equation}
If $w_{\b_1}\cdots w_{\b_m}$ is another expression of $w$ with respect to $\rtid^\times$, then from Lemma \ref{action lem} we have
$$\vep(w)\sum_{j=1}^m(-1)^{j}\sgn(\b_j)p(\b_j)=\frac{1}{2sgn(\g)}p(\g-w\g)=\vep(w)\sum_{i=1}^n(-1)^{i}\sgn(\a_i)p(\a_i),$$
for $\g\in\rt$. So, $\ta_w$ is independent of the choice of the expression for $w$. Thus the map
\begin{equation*}
\begin{array}{ll}
T:\w\longrightarrow A^0\\
\quad\;\;\; w\longmapsto\ta_w\\
\end{array}
\end{equation*}
is well defined. Recall that by Proposition \ref{baby W}, $w_{\pm\ep+\sg}\in\w$, for $\sg\in A^0$. Now, since $T(w_{\ep+\sg})=\sg$, for $\sg\in A^0$, $T$ is an onto map. However, $T$ is not one-to-one. To see this, note that $T(w_{-\ep+\sg_1}w_{\ep+\sg_2})=\sg_1+\sg_2=T(w_{\ep+\sg_1+\sg_2})$, but $w_{-\ep+\sg_1}w_{\ep+\sg_2}\not=w_{\ep+\sg_1+\sg_2}$.

Let $w\in\w$ and $w_{\a_1}\dots w_{\a_n}$ be an expression of $w$ with respect to $\rtid^\times$. From Lemma \ref{action lem}, we have
$$w(\a)=(-1)^n\sgn(\a)\ep+p(\a)-2\sgn(\a)\sum_{i=1}^n(-1)^{n-i}\sgn(\a_i)p(\a_i).$$
Comparing this equation with definitions of the maps $\vep$ and $T$, we conclude that
\begin{equation}\label{waction}
w(\a)=\vep(w)\sgn(\a)\ep+p(\a)-2\sgn(\a)T(w),
\end{equation}
for $\a\in\rtid$.

\begin{lem}\label{antihom}
For $w_1,w_2\in\w$, we have
$$T(w_1w_2)=\vep(w_2)T(w_1)+T(w_2).$$
\end{lem}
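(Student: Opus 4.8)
The plan is to compute $T(w_1 w_2)$ directly from its definition in (\ref{zpartexp}) by concatenating expressions for $w_1$ and $w_2$. First I would fix expressions $w_1 = w_{\a_1}\cdots w_{\a_n}$ and $w_2 = w_{\b_1}\cdots w_{\b_m}$ with respect to $\rtid^\times$, so that $w_1 w_2 = w_{\a_1}\cdots w_{\a_n}w_{\b_1}\cdots w_{\b_m}$ is an expression of length $n+m$. Since $T$ is well defined (independent of the chosen expression, as established before the statement), I may use this particular concatenated expression to evaluate $T(w_1 w_2)$.

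The key step is to split the defining sum over the $n+m$ factors into the part coming from the $\a_i$'s and the part coming from the $\b_j$'s, keeping careful track of the sign powers. Using the first form $T(w)=\sum_{i=1}^N(-1)^{N-i}\sgn(\cdot)p(\cdot)$ with total length $N=n+m$, the contribution of the factor $\b_j$ (which sits in position $n+j$) carries the weight $(-1)^{(n+m)-(n+j)} = (-1)^{m-j}$, so the $\b$-part of the sum reassembles exactly into $T(w_2)$. For the $\a$-part, the factor $\a_i$ (in position $i$) carries the weight $(-1)^{(n+m)-i} = (-1)^m (-1)^{n-i}$, so pulling out the common factor $(-1)^m = \vep(w_2)$ leaves precisely $\vep(w_2)\sum_{i=1}^n (-1)^{n-i}\sgn(\a_i)p(\a_i) = \vep(w_2)T(w_1)$. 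Adding the two parts yields $T(w_1 w_2) = \vep(w_2)T(w_1) + T(w_2)$.

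I expect the only real point of care to be the bookkeeping of the exponents: the shift by $n$ in the index of the $\b$-factors must cancel cleanly (as it does, since $(n+m)-(n+j)=m-j$), while the tail length $m$ for the $\a$-factors is what produces the twist by $\vep(w_2)$. There is no genuine obstacle here, since $T$ is already known to be well defined, so no choice of expression needs to be justified. As a sanity check, I would verify the formula against the explicit computation $T(w_{-\ep+\sg_1}w_{\ep+\sg_2})=\sg_1+\sg_2$ recorded just before the statement: here $w_1=w_{-\ep+\sg_1}$ and $w_2=w_{\ep+\sg_2}$ are both odd, so $\vep(w_2)=-1$, and since $T(w_1)=-\sg_1$ (as $\a_1=-\ep+\sg_1$ has $\sgn(\a_1)=-1$) and $T(w_2)=\sg_2$, the formula gives $(-1)(-\sg_1)+\sg_2=\sg_1+\sg_2$, matching.
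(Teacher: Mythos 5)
Your proof is correct and is essentially identical to the paper's: both concatenate fixed expressions for $w_1$ and $w_2$, invoke the well-definedness of $T$ to evaluate it on the concatenation, and split the sum from (\ref{zpartexp}) so that the tail length of $w_2$'s expression produces the factor $\vep(w_2)$ on the $T(w_1)$ part. The sign bookkeeping you describe, including $(-1)^{(n+m)-(n+j)}=(-1)^{m-j}$ and pulling out $(-1)^m=\vep(w_2)$, is exactly the computation in the paper's displayed chain of equalities.
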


\begin{proof}
Let $w_{\a_1}\dots w_{\a_m}$ and $w_{\b_1}\dots w_{\b_n}$ be expressions of $w_1$ and $w_2$ with respect to $\rtid^\times$, respectively. Then
\begin{eqnarray*}
T(w_1w_2)&=&\sum_{i=1}^m(-1)^{m+n-i}\sgn(\a_i)p(\a_i)+\sum_{j=1}^n(-1)^{m+n-(m+j)}\sgn(\b_j)p(\b_j)\\
&=&\vep(w_2)\sum_{i=1}^m(-1)^{m-i}\sgn(\a_i)p(\a_i)+\sum_{j=1}^n(-1)^{n-j}\sgn(\b_j)p(\b_j)\\
&=&\vep(w_2)T(w_1)+T(w_2).
\end{eqnarray*}
\end{proof}

Lemma \ref{antihom} shows that, in general, $T$ is not a group homomorphism.

\begin{pro}\label{WRtCorr}
The map $w\longmapsto\vep(w)(\ep+T(w))$ is a one-to-one correspondence from $\w$ onto $\pm\ep+A^0$.
\end{pro}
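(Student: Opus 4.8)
The plan is to show that the map $\Phi(w):=\vep(w)(\ep+T(w))$ is a bijection by factoring it through the pair map $(\vep,T)\colon\w\to\{-1,1\}\times A^0$. First I would record that $\Phi$ does land in $\pm\ep+A^0$: since $\sgn(\ep)=1$ and $\sgn$ annihilates $A^0$, one has $\sgn(\Phi(w))=\vep(w)\in\{-1,1\}$ and $p(\Phi(w))=\vep(w)T(w)\in A^0$. Because every element of $A$ is uniquely determined by its $\sgn$ and $p$ components, the assignment $(\delta,\tau)\mapsto\delta(\ep+\tau)$ is a bijection from $\{-1,1\}\times A^0$ onto $\pm\ep+A^0$, with inverse $\eta\mapsto(\sgn(\eta),\sgn(\eta)p(\eta))$. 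Hence $\Phi$ is a bijection precisely when $w\mapsto(\vep(w),T(w))$ is a bijection from $\w$ onto $\{-1,1\}\times A^0$, and it suffices to prove the latter.

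For injectivity I would appeal to the action formula (\ref{waction}). Evaluating it at $\a=\ep$ gives $w(\ep)=\vep(w)\ep-2T(w)$, while evaluating at any $\sg\in A^0\subseteq\rtid$ (where $\sgn(\sg)=0$ and $p(\sg)=\sg$) gives $w(\sg)=\sg$; thus every $w\in\w$ fixes $A^0$ pointwise, and its value on $\ep$ is governed solely by $\vep(w)$ and $T(w)$. Since $A=\ad\oplus A^0=\mathbb{Z}\ep\oplus A^0$, an element of $\w$ is completely determined by its action on $\ep$ and on $A^0$, so the pair $(\vep(w),T(w))$ determines $w$ uniquely. I expect this to be the heart of the argument, and the one place where I must use that $\{\ep\}\cup A^0$ generates $A$; the remaining steps are formal.

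For surjectivity I would exhibit preimages explicitly, using that $w_{\ep+\sg}\in\w$ by Proposition \ref{baby W}, with $\vep(w_{\ep+\sg})=-1$ (a single reflection) and $T(w_{\ep+\sg})=\sg$. Given $(\delta,\tau)\in\{-1,1\}\times A^0$: if $\delta=-1$, the reflection $w_{\ep+\tau}$ already has sign $-1$ and $T$-value $\tau$; if $\delta=1$, I would take $w_\ep w_{\ep+\tau}$, whose sign is $(-1)(-1)=1$ and whose $T$-value, computed from the twisted additivity of Lemma \ref{antihom} as $\vep(w_{\ep+\tau})T(w_\ep)+T(w_{\ep+\tau})=(-1)\cdot 0+\tau=\tau$, is $\tau$. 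This covers both values of $\delta$, so $(\vep,T)$ is onto. Combining this with the previous paragraph shows $(\vep,T)$ is a bijection, and therefore so is $\Phi$.
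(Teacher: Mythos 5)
Your proof is correct and follows essentially the same route as the paper's: the paper also proves surjectivity by the explicit preimages $w_\a$ (for $\sgn(\a)=-1$) and $w_\ep w_\a$ (for $\sgn(\a)=1$), and injectivity by separating the $\vep$- and $T$-components via the decomposition $A=\ad\oplus A^0$ and then invoking (\ref{waction}) to conclude $w_1=w_2$. Your explicit factorization through the pair map $(\vep,T)$, together with the observation that every $w\in\w$ fixes $A^0$ pointwise, merely makes explicit steps the paper leaves implicit, so there is no substantive difference.
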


\begin{proof}
We define the map $\mathfrak{s}:\w\longrightarrow\pm\ep+A^0$ by $\mathfrak{s}(w)=\vep(w)(\epsilon+T(w))$. For $\a\in\rtid^\times$, let $w=w_\a$ if $\sgn(\a)=-1$ and let $w=w_\ep w_\a$ if $\sgn(\a)=1$. Then $w\in\w$ and $\mathfrak{s}(w)=\a$. So $\mathfrak{s}$ is an onto map. On the other hand, let $\mathfrak{s}(w_1)=\mathfrak{s}(w_2)$, for $w_1,w_2\in\w$. Then $\vep(w_1)\ep=\vep(w_2)\ep$ and $\vep(w_1)T(w_1)=\vep(w_2)T(w_2)$. So we have $\vep(w_1)=\vep(w_2)$ and $T(w_1)=T(w_2)$. Then by (\ref{waction}), For $\a\in\pm\ep+A^0$, we have
\begin{eqnarray*}
w_1\a &=& \vep(w_1)\sgn(\a)\ep+p(\a)-2\sgn(\a)T(w_1)\\
&=& \vep(w_2)\sgn(\a)\ep+p(\a)-2\sgn(\a)T(w_2)\\
&=& w_2\a.
\end{eqnarray*}
It means that $w_1=w_2$. Thus $\mathfrak{s}$ is one-to-one.
\end{proof}

Let $\w^0:=\Ker(\vep)$. Then $\w^0$ is the set of all even elements of $\w$.

\begin{pro}\label{W0A0iso}
$T_{|_{\w^0}}:\w^0\longrightarrow A^0$ is an isomorphism.
\end{pro}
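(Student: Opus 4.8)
The plan is to verify the three defining properties of an isomorphism in turn: that $T_{|_{\w^0}}$ is a group homomorphism, that it is injective, and that it is surjective onto $A^0$. First I would note that $\w^0=\Ker(\vep)$ is genuinely a subgroup of $\w$, being the kernel of the group homomorphism $\vep$, so that the restriction $T_{|_{\w^0}}$ makes sense as a map of groups in the first place.

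For the homomorphism property I would invoke Lemma \ref{antihom}. Given $w_1,w_2\in\w^0$, the element $w_2$ is even, so $\vep(w_2)=1$, and the formula $T(w_1w_2)=\vep(w_2)T(w_1)+T(w_2)$ collapses to $T(w_1w_2)=T(w_1)+T(w_2)$. Thus the only obstruction to $T$ being a homomorphism on all of $\w$, namely the $\vep(w_2)$ twist, disappears on the even subgroup, and $T_{|_{\w^0}}$ respects the group operation.

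Injectivity I would read off directly from Proposition \ref{WRtCorr}, which asserts that $w\longmapsto\vep(w)(\ep+T(w))$ is a bijection, hence injective, on all of $\w$. If $w_1,w_2\in\w^0$ satisfy $T(w_1)=T(w_2)$, then since $\vep(w_1)=\vep(w_2)=1$ we get $\vep(w_1)(\ep+T(w_1))=\vep(w_2)(\ep+T(w_2))$, and injectivity of that map forces $w_1=w_2$. Equivalently, one checks that the kernel is trivial: if $T(w)=0$ with $w$ even, then $\vep(w)(\ep+T(w))=\ep$, which is the image of the identity, so $w=1$.

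The one step requiring a little care is surjectivity, and this is where I expect the only real obstacle. We already know $T$ is onto $A^0$ via $T(w_{\ep+\sg})=\sg$, but $w_{\ep+\sg}$ is a single reflection and hence \emph{odd}, so it does not lie in $\w^0$; one must produce an \emph{even} preimage without disturbing the $T$-value. The fix is a parity correction: for $\sg\in A^0$ I would take $w=w_\ep w_{\ep+\sg}$, which is a product of two reflections and therefore even, and which lies in $\w$ by Proposition \ref{baby W}. A direct application of Lemma \ref{antihom}, using $T(w_\ep)=\sgn(\ep)p(\ep)=0$ and $\vep(w_{\ep+\sg})=-1$, gives $T(w_\ep w_{\ep+\sg})=(-1)\cdot 0+\sg=\sg$. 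Hence every $\sg\in A^0$ admits an even preimage, which completes the surjectivity and thereby the proof that $T_{|_{\w^0}}$ is an isomorphism.
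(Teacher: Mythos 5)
Your proof is correct and follows essentially the same route as the paper's: the homomorphism property from Lemma \ref{antihom} with $\vep(w_2)=1$, injectivity via triviality of the kernel, and surjectivity via the even element $w_\ep w_{\ep+\sg}$ with $T(w_\ep w_{\ep+\sg})=\sg$. The only cosmetic difference is that for injectivity you cite Proposition \ref{WRtCorr}, whereas the paper applies (\ref{waction}) directly; since the injectivity in Proposition \ref{WRtCorr} is itself proved from (\ref{waction}), the underlying argument is the same.
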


\begin{proof}
Let $w_1,w_2$ be elements of $\w^0$. Then by Lemma \ref{antihom}
$$T(w_1w_2)=\vep(w_2)T(w_1)+T(w_2)=T(w_1)+T(w_2).$$
Thus $T_{|_{\w^0}}$ is a homomorphism. If $w\in\Ker(T_{|_{\w^0}})$, then $\vep(w)=1$ and $T(w)=0$. Thus by (\ref{waction}),  $w=1$. For $\sg\in A^0$, let $w=w_\ep w_{\ep+\sg}\in\w^0$. Then $T(w)=\sg$. So, $T_{|_{\w^0}}$ is an isomorphism.
\end{proof}

\begin{thm}\label{uniexpspro}
Each element $w$ in $\w$ has a unique expression in the form
$$w=w_\ep^{\d_{\vep(w),1}}w_{\ep+T(w)},$$
where $\d$ is the Kronecker delta. In particular, if $\wdt:=\langle w_\ep\rangle$ is the finite Weyl group of type $A_1$, we have $$\w=\wdt\ltimes\w^0.$$
\end{thm}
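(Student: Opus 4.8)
The plan is to establish the existence of the claimed expression first, then its uniqueness, and finally to read off the semidirect product decomposition as a formal consequence. For existence, I would fix $w\in\w$ and use the explicit formula (\ref{waction}) describing the action of $w$ on $\rtid$, namely $w(\a)=\vep(w)\sgn(\a)\ep+p(\a)-2\sgn(\a)T(w)$. The strategy is to show that the candidate element $w':=w_\ep^{\d_{\vep(w),1}}w_{\ep+T(w)}$ acts on every $\a\in\rtid$ exactly as $w$ does, and then invoke the fact (implicit in the proof of Proposition \ref{WRtCorr}, where the action formula was used to force equality of Weyl group elements) that an element of $\w$ is determined by its action on $\rtid$. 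Concretely, I would compute $\vep(w')$ and $T(w')$ for the candidate and check they agree with $\vep(w)$ and $T(w)$: using $\vep(w_\ep)=-1$ and $\vep(w_{\ep+T(w)})=-1$ together with the homomorphism property of $\vep$, one gets $\vep(w')=(-1)^{\d_{\vep(w),1}}\cdot(-1)$, which equals $\vep(w)$ in both the even and odd cases by inspection of the Kronecker exponent. For $T$, I would apply Lemma \ref{antihom} together with $T(w_\ep)=0$ and $T(w_{\ep+T(w)})=T(w)$ to verify $T(w')=T(w)$. Once $\vep(w')=\vep(w)$ and $T(w')=T(w)$, formula (\ref{waction}) shows $w'$ and $w$ act identically on $\rtid$, hence $w'=w$.

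For uniqueness, the cleanest route is to invoke Proposition \ref{WRtCorr}: the map $\mathfrak{s}(w)=\vep(w)(\ep+T(w))$ is a bijection from $\w$ onto $\pm\ep+A^0$. Since the data $(\vep(w),T(w))$ determine $w$ uniquely via this correspondence, and the proposed expression is entirely built from $\vep(w)$ and $T(w)$ (the power of $w_\ep$ is dictated by $\vep(w)$ through the Kronecker delta, and the subscript of the second factor is $\ep+T(w)$), there is no freedom left in the expression. Thus two expressions of the stated form for the same $w$ must have the same exponent on $w_\ep$ and the same second factor, giving uniqueness.

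Finally, for the semidirect product statement $\w=\wdt\ltimes\w^0$, I would assemble the structural facts already proved. We have $\wdt=\langle w_\ep\rangle$ with $w_\ep^2=1$, so $\wdt$ is a finite group of order two (the finite Weyl group of type $A_1$). By Proposition \ref{W0A0iso}, $\w^0=\Ker(\vep)$ is a normal subgroup of $\w$ isomorphic to $A^0$, and normality is immediate since $\w^0$ is the kernel of the homomorphism $\vep$. The existence-and-uniqueness statement just established shows precisely that every $w$ factors uniquely as a power of $w_\ep$ times an element of $\w^0$: indeed $w_{\ep+T(w)}$ need not lie in $\w^0$, so to match the decomposition I would instead write $w=w_\ep^{\d_{\vep(w),1}}\,(w_\ep^{-\d_{\vep(w),1}}w)$ and observe that the second factor is even, hence in $\w^0$, while checking that $\wdt\cap\w^0=\{1\}$ because $w_\ep$ is odd. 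Together with $\wdt\w^0=\w$, this yields the internal semidirect product.

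The main obstacle I anticipate is bookkeeping the two parity cases cleanly in the existence step. When $\vep(w)=-1$ the Kronecker delta vanishes and the candidate is simply $w_{\ep+T(w)}$, whereas when $\vep(w)=1$ it is $w_\ep w_{\ep+T(w)}$; I must verify in both cases that the computed $\vep$ matches, taking care that the sign contributions from $w_\ep$ and from $w_{\ep+T(w)}$ combine correctly. The $T$-computation is the more delicate of the two because Lemma \ref{antihom} carries the twisting factor $\vep(w_{\ep+T(w)})=-1$, so I would verify that in the $\vep(w)=1$ case the relation $T(w_\ep w_{\ep+T(w)})=\vep(w_{\ep+T(w)})T(w_\ep)+T(w_{\ep+T(w)})=(-1)\cdot 0+T(w)=T(w)$ indeed produces the right element. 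Once these sign computations are pinned down, the rest follows mechanically from the already-established correspondence and the kernel structure.
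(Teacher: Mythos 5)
Your existence and uniqueness arguments are correct and coincide with the paper's own proof: the paper likewise reduces the identity $w=w_\ep^{\d_{\vep(w),1}}w_{\ep+T(w)}$ to the two equalities $\vep(w)=\vep(w_\ep^{t}w_{\ep+T(w)})$ and $T(w)=T(w_\ep^{t}w_{\ep+T(w)})$, justified exactly as you do --- the first by the homomorphism property of $\vep$, the second by Lemma \ref{antihom} with $T(w_\ep)=0$ and $T(w_{\ep+T(w)})=T(w)$ --- and then concludes via the bijection $\mathfrak{s}$ of Proposition \ref{WRtCorr}. Your variant of checking the action on $\rtid$ through formula (\ref{waction}) is just the content of the injectivity part of that proposition, so this portion is sound and not genuinely different.

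However, your semidirect-product step contains a concrete parity error. You write $w=w_\ep^{\d_{\vep(w),1}}\bigl(w_\ep^{-\d_{\vep(w),1}}w\bigr)$ and claim the second factor is even; it is not. Indeed $\vep\bigl(w_\ep^{-\d_{\vep(w),1}}w\bigr)=(-1)^{\d_{\vep(w),1}}\vep(w)$, which equals $-1$ in both cases: if $\vep(w)=1$ then $\d_{\vep(w),1}=1$ and the value is $(-1)\cdot 1=-1$, while if $\vep(w)=-1$ then $\d_{\vep(w),1}=0$ and the value is $-1$. Concretely, for even $w$ your second factor is the reflection $w_{\ep+T(w)}\notin\w^0$, and for odd $w$ it is $w$ itself. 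The exponent of the $\wdt$-component must be shifted by one: with $t=\d_{\vep(w),1}$, write $w=w_\ep^{t+1}\bigl(w_\ep w_{\ep+T(w)}\bigr)$, where now $w_\ep w_{\ep+T(w)}$ is a product of two reflections and hence lies in $\w^0=\Ker(\vep)$; this is precisely the paper's line ``$w=w_\ep^{t+1}w_\ep w_{\ep+T(w)}$, thus $\w=\wdt\w^0$.'' With this one correction, your remaining ingredients --- normality of $\w^0$ as the kernel of the homomorphism $\vep$, and $\wdt\cap\w^0=\{1\}$ because $\vep(w_\ep)=-1$ --- do complete the internal semidirect-product decomposition.
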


\begin{proof}
To prove the first equation it is enough to show, by Proposition \ref{WRtCorr}, that
$$\vep(w)=\vep(w_\ep^{\d_{\vep(w),1}}w_{\ep+T(w)})\andd T(w)=T(w_\ep^{\d_{\vep(w),1}}w_{\ep+T(w)}).$$
Let $t=\d_{\vep(w),1}$. By definition of $\vep$, we have $\vep(w_\ep^tw_{\ep+T(w)})=-1$, if $\vep(w)=-1$ and $\vep(w_\ep^tw_{\ep+T(w)})=1$, if $\vep(w)=1$. Thus $\vep(w)=\vep(w_\ep^tw_{\ep+T(w)})$.

By Lemma \ref{antihom}, we have
$$T(w_\ep^tw_{\ep+T(w)})=-\d_{\vep(w),1}0+T(w)=T(w).$$
Now, for each $w\in\w$, we have $w=w_\ep^{t+1}w_\ep w_{\ep+T(w)}$. Thus $\w=\wdt\w^0$. This together with the facts that $\wdt\bigcap\w^0=\{0\}$ and $\w^0=\Ker(\vep)$ is a normal subgroup of $\w$ show that $\w=\wdt\ltimes\w^0$.
\end{proof}

\begin{rem}
(i) By Theorem \ref{uniexpspro}, every element of $\w^0$ is of the form $w_\ep w_{\ep+\sigma}$, for some $\sigma\in A^0$ and by Proposition \ref{W0A0iso}, we have $T(w_\ep w_{\ep+\sigma})=\sigma$. Let $\varphi:\wdt\longrightarrow\Aut(A^0)$ be defined by $\varphi(w^t_\ep)(\sigma)=T_{|_{\w^0}}(w_\ep^t w_\ep w_{\ep+\sigma}w_\ep^t)$. Then it is easy to see that $\w\cong\wdt\ltimes_{\varphi} A^0$, i.e., $\w$ is isomorphic to the semidirect product of the finite Weyl group of type $A_1$ and the radial of the form $\frm$ on $A$.

(ii) Let $\w^1=\{w_\alpha\;|\;\alpha\in\ep+A^0\}$. From Theorem \ref{uniexpspro}, we have $\w=\w^0\uplus\w^1$, where $\uplus$ means disjoint union. Both $\w^0$ and $\w^1$ are in one-to-one correspondence with $A^0$. Thus one can consider $\w$ as a union of two copies of $A^0$.
\end{rem}

At the end of this section, using (\ref{waction}) and the unique expression of Theorem \ref{uniexpspro}, we prove some interesting identities.

\begin{lem}\label{identities}
Let $w,w_1,w_2\in\w$. We have
\begin{itemize}
\item[(i)] $w^{-1}=w_\ep^{\d_{\vep(w),1}}w_{\ep-\vep(w)T(w)}=w_\ep^{\d_{\vep(w),1}}w_{\vep(w)\ep-T(w)}$.
\item[(ii)]
      $w_1w_2w_1^{-1}=w_\ep^{\d_{\vep(w_2),1}}w_{\ep+\vep(w_1)(T(w_2)+(\vep(w_2)-1)T(w_1))}.$
\end{itemize}
\end{lem}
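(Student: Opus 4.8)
The plan is to verify both identities using the unique-expression criterion from Theorem \ref{uniexpspro}: two elements of $\w$ coincide if and only if they have the same sign under $\vep$ and the same image under $T$, equivalently (by Proposition \ref{WRtCorr}) the same value $\vep(w)(\ep+T(w))$. So for each claimed identity I would compute $\vep$ and $T$ of both sides and check that they match, rather than manipulating products of reflections directly. The two main tools are the sign homomorphism property $\vep(w_1w_2)=\vep(w_1)\vep(w_2)$ and the twisted additivity of Lemma \ref{antihom}, namely $T(w_1w_2)=\vep(w_2)T(w_1)+T(w_2)$.

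\emph{For part (i).} First I would record that $\vep(w^{-1})=\vep(w)$, since $\vep$ is a homomorphism into $\{-1,1\}$, so the power of $w_\ep$ appearing in the normal form of $w^{-1}$ is indeed $\d_{\vep(w^{-1}),1}=\d_{\vep(w),1}$, matching the exponent in the claim. Next I would compute $T(w^{-1})$ from the relation $0=T(1)=T(ww^{-1})=\vep(w^{-1})T(w)+T(w^{-1})=\vep(w)T(w)+T(w^{-1})$, using Lemma \ref{antihom} and $T(1)=0$. This gives $T(w^{-1})=-\vep(w)T(w)$. Since $\vep(w)\in\{-1,1\}$ we also have $-\vep(w)T(w)=\vep(w)(-T(w))$ only when I rewrite, but more directly the element whose normal form has this $T$-value is $w_\ep^{\d_{\vep(w),1}}w_{\ep+T(w^{-1})}=w_\ep^{\d_{\vep(w),1}}w_{\ep-\vep(w)T(w)}$, which is the first displayed form; the second form $w_\ep^{\d_{\vep(w),1}}w_{\vep(w)\ep-T(w)}$ follows since $\vep(w)=\pm1$ forces $-\vep(w)T(w)$ to equal $\vep(w)\ep-T(w)$ only after noting these label the same reflection, which I would confirm by a short case check on $\vep(w)=\pm1$ (the two cases interchange the roles cleanly).

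\emph{For part (ii).} I would compute the sign and $T$-value of $w_1w_2w_1^{-1}$. For the sign, $\vep(w_1w_2w_1^{-1})=\vep(w_1)\vep(w_2)\vep(w_1^{-1})=\vep(w_2)$, which fixes the exponent $\d_{\vep(w_2),1}$ on $w_\ep$. For the $T$-value I would apply Lemma \ref{antihom} twice. Writing $u=w_2w_1^{-1}$, first $T(u)=\vep(w_1^{-1})T(w_2)+T(w_1^{-1})=\vep(w_1)T(w_2)-\vep(w_1)\vep(w_2)T(w_1)$ using part (i). Then $T(w_1u)=\vep(u)T(w_1)+T(u)$, where $\vep(u)=\vep(w_2)\vep(w_1)$, so
\begin{equation*}
T(w_1w_2w_1^{-1})=\vep(w_1)\vep(w_2)T(w_1)+\vep(w_1)T(w_2)-\vep(w_1)\vep(w_2)T(w_1)=\vep(w_1)\bigl(T(w_2)+(\vep(w_2)-1)T(w_1)\bigr),
\end{equation*}
after regrouping the two $T(w_1)$ terms as $\vep(w_1)\vep(w_2)T(w_1)-\vep(w_1)T(w_1)+\vep(w_1)T(w_2)$. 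This matches the $T$-value in the claimed normal form, completing the identification by Theorem \ref{uniexpspro}.

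The computations are entirely mechanical once the two structural facts (homomorphism property of $\vep$, twisted additivity of $T$) are in hand; the only place demanding care is the bookkeeping of signs in part (ii), where one must apply Lemma \ref{antihom} in the correct order and substitute the formula for $T(w_1^{-1})$ from part (i) before collecting terms. I expect that sign bookkeeping to be the main obstacle, not any conceptual difficulty.
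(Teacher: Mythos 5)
Your proposal is correct and takes essentially the same route as the paper: both proofs verify each identity by computing $\vep$ and $T$ of the two sides and appealing to the uniqueness of the normal form (Theorem \ref{uniexpspro}, via Proposition \ref{WRtCorr}), with Lemma \ref{antihom} doing all the work; the only cosmetic difference is in (ii), where the paper groups the product as $(w_1w_2)w_1^{-1}$ and applies Lemma \ref{antihom} once, while you group as $w_1(w_2w_1^{-1})$ and apply it twice together with part (i). One sign slip needs fixing: part (i) gives $T(w_1^{-1})=-\vep(w_1)T(w_1)$, with no factor $\vep(w_2)$, so $T(w_2w_1^{-1})=\vep(w_1)T(w_2)-\vep(w_1)T(w_1)$; as written, your extra $\vep(w_2)$ makes the middle expression of your display cancel to $\vep(w_1)T(w_2)$, contradicting the right-hand side. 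Your own ``regrouping'' sentence already records the correct terms $\vep(w_1)\vep(w_2)T(w_1)-\vep(w_1)T(w_1)+\vep(w_1)T(w_2)$, which indeed collect to $\vep(w_1)\bigl(T(w_2)+(\vep(w_2)-1)T(w_1)\bigr)$, so this is a transcription error rather than a gap, and the argument goes through once the intermediate line is corrected.
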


\begin{proof}
(i) Since $\vep$ is a group homomorphism, we have $\vep(w^{-1})=\vep(w)$. Also, from Lemma \ref{antihom}, we have
$$0=T(ww^{-1})=\vep(w)T(w)+T(w^{-1}).$$
Thus $T(w^{-1})=-\vep(w)T(w)$. The second equation is obvious.

(ii) Again, it is obvious that $\vep(w_1w_2w_1^{-1})=\vep(w_2)$. From Lemma \ref{antihom} and (i), we have
\begin{eqnarray*}
T(w_1w_2w_1^{-1})&=&\vep(w_1^{-1})T(w_1w_2)+T(w_1^{-1})\\
&=&\vep(w_1)(\vep(w_2)T(w_1)+T(w_2))-\vep(w_1)T(w_1)
\end{eqnarray*}
Thus $T(w_1w_2w_1^{-1})=\vep(w_1)(T(w_2)+(\vep(w_2)-1)T(w_1))$. From Theorem \ref{uniexpspro}, we get the result.
\end{proof}

\section{\bf A length function for the $A_1$-type Weyl group $\w$ of nullity $\nu$}\setcounter{thm}{0}\label{wlf}
In this section we assume that $A$ is a free abelian group of rank $\nu+1$ and we offer a length function for the $A_1$-type Weyl group $\w$ on $A$. We call $\w$ the $A_1$-type Weyl group of nullity $\nu$. Set $\Lam:=A^0$.

Let $R$ be an affine reflection system of type $A_1$ in $A$. We identify $R$ with $1\otimes R$ in $$\v:=\mathbb{R}\otimes_\mathbb{Q}A.$$
Then $R$ turns out to be an extended affine root system of type $A_1$ in $\v$ in the sense of \cite[Definition II.2.1]{AABGP}. We simply say that $R$ is an extended affine root system in $A$. From (\ref{type-A_1}), $R=(S+S)\cup(\rd^\times+S)$, where, in this case, the pointed reflection space $S$ is a \textit{semilattice} in $\Lam$ in the sense of
\cite[Definition II.1.2]{AABGP}, namely $S$ is a subset of $\Lam$ satisfying
$$0\in S,\quad S\pm 2S\subseteq S,\quad \langle S\rangle=\Lam.$$
By \cite[Remark II.1.6]{AABGP}, we have
$$S=\bigcup_{i=0}^m(\tau_i+2\Lambda),$$
where $\tau_0=0$ and $\tau_i$'s represent distinct cosets of $2\Lam$ in $\Lam$, for $0\leq i\leq m$. The positive integer $m$ is called the index of $S$. By \cite[Proposition II.1.11]{AABGP}, $\Lambda$ has a $\mathbb{Z}$-basis consisting of elements of $S$.

We assume that $\ba=\dot{\ba}\cup\ba^0$ is a fixed $\mathbb{Z}$-basis of $A$, where $\ba^0=\{\sg_1,\dots,\sg_\nu\}$ is a $\mathbb{Z}$-basis of $\Lam$ and $\dot{\ba}=\{\ep\}$ is a $\mathbb{Z}$-basis of the fixed complement $\ad$ of $\Lam$. The extended affine root system $$\rtid=\Lam\cup(\pm\ep+\Lam)$$ in $A$ is called the \textit{toroidal} root system of type $A_1$. Let $$S_b:=\bigcup_{i=0}^\nu(\sg_i+2\Lam),$$
where $\sg_0=0$. It is clear that $S_b$ is a semilattice in $A^0$. We call the extended affine root system
$$R_b:=(S_b+S_b)\cup(\pm\ep+S_b)$$
the \textit{baby} extended affine root system of type $A_1$. Up to isomorphism, we may assume that for any extended affine root system $R$ of type $A_1$ in $A$, we have $\ep\in R^\times$ and $\ba^0\subseteq S$, where $S$ is the semilattice associated with $R$. Then $R_b\subseteq R\subseteq\rtid$. We have
$$\rtid=\{k\ep+\sg\;|\;k\in\{0,\pm1\},\;\sg\in\Lam\}.$$

For $\sg=\sum_{i=1}^\nu m_i\sg_i\in\rtid^0=\Lam$, let
\begin{equation}\label{pmp}
m^+:=\sum_{\substack{i=1\\m_i>0}}^\nu m_i\andd m^-:=\sum_{\substack{i=1\\m_i<0}}^\nu m_i,
\end{equation}
where the sum on an empty set is considered to be zero.
Then, we define the \textit{height function} $\hti_{\ba^0}:\Lam\longrightarrow\mathbb{Z}$ as follows. For $\sg\in\Lam$, let
\begin{equation}\label{zht}
\hti_{\ba^0}(\sg):=\left\{
        \begin{array}{ll}
            2m^+, & \hbox{$m^+\geq|m^-|$,}\\
            2m^-, & \hbox{$m^+<|m^-|$.}
        \end{array}\right.
\end{equation}
Now, we extend the height function to $\rtid$ as follows.

\begin{DEF}\label{ht}
For $\a=k\ep+\sigma\in\rtid$, we define
\begin{equation*}
\hti(\a):=\left\{
    \begin{array}{ll}
        k-\hti_{\ba^0}(\sg), & \hbox{$k=-1,\;m^+=|m^-|$,}\\
        k+\hti_{\ba^0}(\sg), & \hbox{otherwise.}
    \end{array}\right.
\end{equation*}
If $\a\in\rtid$ is a nonzero root, then $\hti(\a)\not=0$.
\end{DEF}

Let $\frm_E:\Lam\times\Lam\longrightarrow\mathbb{Z}$ be a form on $\Lam$ defined by $$(\sum_{i=1}^\nu m_i\sg_i,\sum_{i=1}^\nu n_i\sg_i)_E:=\sum_{i=1}^\nu m_in_i,$$
where $m_i,n_i\in\mathbb{Z}$, for $1\leq i\leq\nu$. If one considers the $\nu$ dimensional vector space $\v^0:=\mathbb{R}\otimes_{\mathbb{Q}}\Lam$ as an Euclidean space, then $\frm_E$ is the restriction of the Euclidean inner product on $\v^0$ to $\Lam$. Using $\frm_E$, we define a notion of positive and negative roots on $\rtid$.

\begin{DEF}\label{PNroot}
Let $\a\in\rtid\setminus\{0\}$.

(i) We call $\a$ positive, if
$$sgn(\a)\not=-1\quad\mathrm{and}\quad (p(\a),\sum_{i=1}^\nu\sg_i)_E\geq0$$
or
$$sgn(\a)=-1\quad\mathrm{and}\quad (p(\a),\sum_{i=1}^\nu\sg_i)_E>0.$$
We denote the set of positive roots by $\rtid^+$.

(ii) We call a non-zero root $\a$ negative, if $\a\in\rtid\setminus\rtid^+$. We denote the set of negative roots by $\rtid^-$.
\end{DEF}

We have $\rtid=\rtid^+\cup\{0\}\cup\rtid^-$, 
$\rtid^\times=\rtid^{\times^+}\cup\rtid^{\times^-}$ and $\rtid^0=\rtid^{0^+}\cup\{0\}\cup\rtid^{0^-}$, where $\rtid^{\times^+}:=\rtid^\times\cap\rtid^+$, $\rtid^{\times^-}:=\rtid^\times\cap\rtid^-$, $\rtid^{0^+}:=\rtid^0\cap\rtid^+$ and $\rtid^{0^+}:=\rtid^0\cap\rtid^+$.

\begin{lem}\label{hipn}
Let $\a\in\rtid$. We have $\a\in\rtid^+$ (resp. $\a\in\rtid^-$) if and only if $\hti(\a)>0$ (reps. $\hti(\a)<0$).
\end{lem}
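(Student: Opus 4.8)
The plan is to reduce the statement to a short case analysis governed by a single identity that lines up the two definitions. Write $\a=k\ep+\sg$ with $k=\sgn(\a)\in\{0,\pm1\}$ and $\sg=p(\a)=\sum_{i=1}^\nu m_i\sg_i$. The first step is to record the key observation that
$$\left(p(\a),\sum_{i=1}^\nu\sg_i\right)_E=m^++m^-,$$
so that, using $|m^-|=-m^-$, the trichotomy $m^++m^->0$, $=0$, $<0$ coincides \emph{exactly} with $m^+>|m^-|$, $m^+=|m^-|$, $m^+<|m^-|$. This is precisely the distinction that governs the positivity condition of Definition \ref{PNroot} (through the sign of the inner product) and simultaneously the case split in the height formulas (\ref{zht}) and Definition \ref{ht} (through the comparison of $m^+$ and $|m^-|$), so it lets me match the two definitions term by term.

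Next I would dispose of the two non-boundary cases. When $m^+>|m^-|$ one has $\hti_{\ba^0}(\sg)=2m^+\geq2$, the special branch of Definition \ref{ht} does not apply, and $\hti(\a)=k+2m^+\geq-1+2>0$; at the same time $\a$ is positive for every admissible $k$, since $m^++m^->0$ satisfies both the non-strict requirement (for $\sgn(\a)\neq-1$) and the strict one (for $\sgn(\a)=-1$). When $m^+<|m^-|$ the situation is symmetric: $\hti_{\ba^0}(\sg)=2m^-\leq-2$, the special branch again does not apply, $\hti(\a)=k+2m^-\leq1-2<0$, and $\a$ fails the positivity condition for every $k$. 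Both bounds are immediate from $|k|\le 1$.

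The delicate case is the boundary $m^+=|m^-|$, i.e. $(p(\a),\sum\sg_i)_E=0$. If $k=-1$, the first (special) branch of Definition \ref{ht} gives $\hti(\a)=-1-2m^+<0$, and correspondingly $\a$ is \emph{not} positive because the $\sgn(\a)=-1$ clause of Definition \ref{PNroot} demands a strictly positive inner product; so both sides register negativity. If instead $k\in\{0,1\}$, the ``otherwise'' branch gives $\hti(\a)=k+2m^+$ and $\a$ is positive, the non-strict inequality being met; I then check $\hti(\a)>0$, which is clear for $k=1$ and, for $k=0$, follows from $\hti(\a)=2m^+=2|m^-|$ together with the observation that $\a\neq0$ forces $m^+=|m^-|>0$ (otherwise $\sg=0$ and $\a=0$).

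Having shown in every case that membership in $\rtid^+$ goes with $\hti(\a)>0$ and membership in $\rtid^-$ goes with $\hti(\a)<0$, the full biconditional follows from the disjoint partition $\rtid=\rtid^+\cup\{0\}\cup\rtid^-$ and the fact, already noted in Definition \ref{ht}, that $\hti(\a)\neq0$ for nonzero $\a$. I expect the only genuinely delicate point to be the sub-case $k=-1$, $m^+=|m^-|$: it is exactly the configuration for which the height function was given its separate branch, and reconciling the strict-versus-non-strict asymmetry of the positivity condition with that branch is the heart of the matter; everything else is routine sign bookkeeping with $|k|\le1$.
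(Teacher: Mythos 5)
Your proof is correct and takes essentially the same approach as the paper's: both rest on the key identity $\left(p(\a),\sum_{i=1}^\nu\sg_i\right)_E=m^++m^-$ and then match the branches of Definition \ref{ht} against the two clauses of Definition \ref{PNroot}. The only (cosmetic) difference is that the paper disposes of negative roots via the symmetry $\rtid^-\subseteq-\rtid^+$ together with $\hti(\a)=-\hti(-\a)$, whereas you exhaust the trichotomy $m^+\gtrless|m^-|$ directly and conclude from the partition $\rtid=\rtid^+\cup\{0\}\cup\rtid^-$; your treatment of the boundary subcase $k=0$, $m^+=|m^-|>0$ is in fact slightly more explicit than the paper's.
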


\begin{proof}
Let $\a=\sgn(\a)\ep+p(\a)\in\rtid$ and $p(\a)=\sum_{i=1}^\nu m_i\sg_i$. We have
$$(p(\a),\sum_{i=1}^\nu\sg_i)_E=\sum_{i=1}^\nu m_i=m^++m^-,$$
where $m^+$ and $m^-$ are as in (\ref{pmp}). Since $\rtid^-\subseteq-\rtid^+$ and $\hti(\a)=-\hti(-\a)$, for $\a\in\rtid^-$, it is enough to show that $\a\in\rtid^+$ if and only if $\hti(\a)>0$.

Let $\a\in\rtid^+$. From Definition \ref{PNroot} (i), if $\sgn(\a)\not=-1$, $m^++m^-\geq0$ or if $\sgn(\a)=-1$, $m^++m^->0$. Thus $m^+\geq -m^-$ or $m^+>-m^-$. From Definition \ref{ht}, we have $\hti(\a)=\sgn(\a)+2m^+>0$.

Let $\a$ be an element of $\rtid$ for which $\hti(\a)>0$. Thus, from Definition \ref{ht}, we have $\hti(\a)=\sgn(\a)+\hti(p(\a))$ and $\hti(p(\a))\geq0$. Thus $m^+\geq -m^-$, if $\sgn(\a)\not=-1$ or $m^+>-m^-$, if $\sgn(\a)=-1$.
\end{proof}

For $\sg\in\rtid^0\setminus\{0\}$, let $\sg=\sum_{i=1}^\nu m_i\sg_i$. We call $\sg$ a \textit{strictly positive isotropic root} (resp. \textit{strictly negative isotropic root}), if $m_i\geq0$ (resp. $m_i\leq0$), for $1\leq i\leq\nu$.

\begin{DEF}\label{RootBasis}
Let $R$ be an extended affine root system in $A$ and $\Pi=\{\a_0,\a_1,\dots,\a_\nu\}$ be a $\mathbb{Z}$-basis of $A$ in $R_b^\times$. We call $\Pi$ a \textit{root basis for $R$} if each strictly positive isotropic root $\sg$ in $R$ can be written in the form $\sg=\sum_{i=0}^\nu m_i\a_i$, where $m_i\geq0$.
\end{DEF}


Let $\Pi_0$ be the set of all elements of $\rtid$ with height 1. We have $$\Pi_0=\{\a_0:=\ep,\a_1:=\sg_1-\ep,\dots,\a_\nu:=\sg_\nu-\ep\},$$
where $\ba=\{\ep,\sg_1,\dots,\sg_\nu\}$ is the fixed $\mathbb{Z}$-basis of $A$. It is easy to show that $\Pi_0$ is a root basis for $\rtid$. Since the elements of $\Pi_0$ are the only elements of $\rtid$ with height 1, we call it \textit{the fundamental root basis for $R$}. We calculate the length function for $\w$ with respect to the fundamental root basis $\Pi_0$ using the height function of Definition \ref{ht}.

\begin{pro}\label{coht}
Let $\a=\sum_{i=0}^\nu n_i\a_i\in\rtid$. We have
$$|\hti(\a)|=\sum_{i=0}^\nu|n_i|.$$
In particular $\sum_{i=0}^\nu|n_i|$ is odd, if $\a\in\rtid^\times$ and $\sum_{i=0}^\nu|n_i|$ is even, if $\a\in\rtid^0$.
\end{pro}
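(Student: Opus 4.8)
The plan is to translate the whole identity into the three integers $k:=\sgn(\a)$, $m^+$, and $m^-$, and then verify it by a short case analysis governed by the definition of $\hti$. First I would expand $\a$ in the basis $\ba=\{\ep,\sg_1,\dots,\sg_\nu\}$: since $\a_0=\ep$ and $\a_i=\sg_i-\ep$ for $i\ge1$, writing $\a=\sum_{i=0}^\nu n_i\a_i$ gives
$$\a=\Big(n_0-\sum_{i=1}^\nu n_i\Big)\ep+\sum_{i=1}^\nu n_i\sg_i,$$
so $k=n_0-\sum_{i=1}^\nu n_i$ while the isotropic coordinates of $\a$ are exactly $m_i=n_i$ for $1\le i\le\nu$. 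Hence $m^+=\sum_{n_i>0}n_i$ and $m^-=\sum_{n_i<0}n_i$ (sums over $i\ge1$), and the two relations
$$\sum_{i=1}^\nu|n_i|=m^+-m^-\andd n_0=k+m^++m^-$$
reduce the claim to a comparison of two explicit integers.

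Next I would record the quantities to be matched. The right-hand side is
$$\sum_{i=0}^\nu|n_i|=|n_0|+\sum_{i=1}^\nu|n_i|=|k+m^++m^-|+m^+-m^-,$$
while by Definition \ref{ht} the height equals $k+2m^+$ when $m^+\ge|m^-|$ and $k+2m^-$ when $m^+<|m^-|$, with the single exceptional value $-1-2m^+$ in the branch $k=-1$, $m^+=|m^-|$. The decisive observation is that the branch condition $m^+\ge|m^-|$ is literally $m^++m^-\ge0$, which is the inequality controlling the sign of $n_0$; so the piecewise height and the piecewise $|n_0|$ are driven by the same threshold.

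The heart of the argument is then a case split in which I use $k\in\{0,\pm1\}$ to pin down the signs of both $\hti(\a)$ and $n_0$. If $m^+<|m^-|$ then $m^-\le-1$, so $\hti(\a)=k+2m^-\le-1$ and $n_0=k+(m^++m^-)\le0$, whence $|\hti(\a)|=-k-2m^-=-n_0+m^+-m^-=\sum_i|n_i|$. If $m^+\ge|m^-|$ and we are off the exceptional branch, then either $m^+=m^-=0$ (so $\a=k\ep$ and the identity is trivial) or $m^+\ge1$, which forces $\hti(\a)=k+2m^+\ge1$ and $n_0\ge0$ once $m^++m^-\ge1$; the residual possibility $m^++m^-=0$, giving $n_0=k$, is checked directly. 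The step I expect to be delicate is precisely the exceptional branch $k=-1$, $m^+=|m^-|$: here $n_0=-1$, and only the corrected height $-1-2m^+$ (not the naive $-1+2m^+$) yields $|\hti(\a)|=1+2m^+=|n_0|+m^+-m^-$. This is the one place where the definition of $\hti$ must be used exactly as stated, and it is the case most likely to be mishandled.

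Finally, for the parity claim I would note that in every branch $\hti(\a)=k+(\text{even})$, so $\sum_{i=0}^\nu|n_i|=|\hti(\a)|\equiv k=\sgn(\a)\pmod2$. Since $\a\in\rtid^\times$ exactly when $\sgn(\a)=\pm1$ (odd) and $\a\in\rtid^0$ exactly when $\sgn(\a)=0$ (even), the parity of $\sum_i|n_i|$ distinguishes non-isotropic from isotropic roots, as asserted.
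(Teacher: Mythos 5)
Your proposal is correct and follows essentially the same route as the paper's own proof: both convert the coordinates via $n_0=k+m^++m^-$, $n_i=m_i$, and then split into the same cases governed by $m^+\geq|m^-|$ versus $m^+<|m^-|$, with the exceptional branch $k=-1$, $m^+=|m^-|$ handled separately exactly as you do. The only difference is that you spell out the parity claim (that $|\hti(\a)|\equiv\sgn(\a)\pmod 2$), which the paper leaves implicit after its three cases.
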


\begin{proof}
Since $\a\in\rtid$, we have $\a=k\ep+\sg$, where $k\in\{0,\pm1\}$ and $\sg\in\Lam$. Let $\sg=\sum_{i=1}^\nu m_i\sg_i$. On the other hand, we have $\sg_i=\a_0+\a_i$ and $\ep=\a_0$. Thus
$$\a=(k+\sum_{i=1}^\nu m_i)\a_0+\sum_{i=1}^\nu m_i\a_i.$$
Since $\Pi_0$ is a $\mathbb{Z}$-basis for $A$, we have $n_0=k+\sum_{i=1}^\nu m_i=k+m^++m^-$ and $n_i=m_i$, for $1\leq i\leq\nu$. We consider three cases.

First we assume that $k=-1$ and  $m^+=|m^-|$. Then $n_0=-1$. Thus $$\sum_{i=0}^\nu|n_i|=1+m^++|m^-|=2m^++1=-(-2m^+-1)=|\hti(\a)|.$$

Next we assume that either $m^+>|m^-|$, or $m^+=|m^-|$ and $k\not=-1$. Then $m^++m^-=m^+-|m^-|\geq0$. Thus $n_0=k+m^++m^-\geq0$. So, we have
$$\sum_{i=0}^\nu|n_i|=k+m^++m^-+m^+-m^-=2m^++k=|\hti(\a)|.$$

Finally, we assume that $m^+<|m^-|$. Then $n_0=k+m^++m^-\leq0$. Thus
$$\sum_{i=0}^\nu|n_i|=-k-m^+-m^-+m^+-m^-=-2m^--k=|\hti(\a)|.$$
\end{proof}

Here we need to recall the definition of the length function of an arbitrary group, with respect to some generating subset. Let $G$ be a group and $\{g_\a\}_{\a\in\Pi}$ be a set of generators for $G$, namely
$$G=\langle g_\alpha\;|\;\alpha\in\Pi\rangle.$$
Then we have the following definition.

\begin{DEF}\label{Length}
(i) An expression of an element $g$ in $G\setminus\{1\}$, with respect to $\Pi$, is a sequence $g_{\alpha_1}\cdots g_{\alpha_k}$ which equals to $g$, for $\alpha_i\in\Pi$. The positive integer $k$ is called the length of the expression. In this case we say that this expression has occupied $k$ {\it positions}, in which $g_{\a_j}$ is in the $j$-th position. A position is called {\it odd}  ({\it even}) if $j$ is odd (even).

(ii) The length of $g\in G$ with respect to $\Pi$, which is denoted by $\ell_{\Pi}(g)$, is the smallest length of any expression of $g$. By convention, $\ell_\Pi(1)=0$.

(iii) The function $\ell_\Pi$ from $G$ to the set of non-negative integers  $\mathbb{Z}_{\geq0}$, which assigns to each $g\in G$, the integer $l_\Pi(g)$ is called the length function of $G$ with respect to $\Pi$. If there is no confusion about $\Pi$, we show the length function by $\ell$.

(iv) Any expression of $g\in G$ with length $\ell_\Pi(g)$ is called a reduced expression of $g$ with respect to $\Pi$.
\end{DEF}

Here are some elementary properties of the length function, which only depend on the above definition (See \cite[5.2]{Hu}). For $g,g'\in G$, we have

\begin{itemize}
\item[-] $\ell(g)=\ell(g^{-1})$.
\item[-] $\ell(g)=1$ if and only if $g=g_\alpha$, for $\alpha\in\Pi$.
\item[-] $\ell(g)-\ell(g')\leq\ell(gg')\leq \ell(g)+\ell(g')$.
\end{itemize}

Considering an inner automorphism of $G$, one can see the following elementary result.

\begin{lem}\label{conlen}
Let $g_0$ be an element of $G$, $\Pi'$ be a set and $\phi$ be a one to one map from $\Pi$ onto $\Pi'$. For each $\a\in\Pi$, let $g_{\phi(\a)}:=g_0g_\a g_0^{-1}$. Then $\{g_\b\}_{\b\in\Pi'}$ is a set of generators for $G$ and, for each $g\in G$, we have
$$\ell_{\Pi'}(g)=\ell_{\Pi}(g_0^{-1}gg_0).$$
\end{lem}

Now, using the concept of the height and the maps $\vep$ and $T$, we offer a formula for the length function for $\w$ with respect to $\Pi_0$. Let $w\in\w$. Until the end of this section, any expression of $w$ is considered with respect to $\Pi_0$. Let $w_{\a_{i_1}}\dots w_{\a_{i_k}}$ be an expression of $w$. For $0\leq i\leq\nu$, we denote the number of $w_{\a_i}$'s for which $w_{\a_i}$ is in an odd (resp. even) position
by $P_i$ (resp. $N_i$). We have
\begin{equation}\label{PNlen}k=\sum_{i=0}^\nu(P_i+N_i).\end{equation}

\begin{lem}\label{refNum}
Let $w\in\w$ and $w_{\a_{j_1}}\cdots w_{\a_{j_k}}$ be an expression of $w$. If $T(w)=\sum_{i=1}^\nu m_i\sg_i$ then
$$m_i=(-1)^k(P_i-N_i)=\vep(w)(P_i-N_i).$$
\end{lem}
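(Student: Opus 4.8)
The plan is to compute $T(w)$ directly from its defining expression (\ref{zpartexp}) applied to the given word $w_{\a_{j_1}}\cdots w_{\a_{j_k}}$, and then simply read off the coefficient of each $\sg_i$. Concretely, (\ref{zpartexp}) gives
\begin{equation*}
T(w)=\sum_{l=1}^k(-1)^{k-l}\sgn(\a_{j_l})p(\a_{j_l}),
\end{equation*}
so the whole argument reduces to substituting the values of $\sgn$ and $p$ on the elements of $\Pi_0$ and collecting terms.

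First I would record these values. Since $\a_0=\ep$ and $\a_i=\sg_i-\ep$ for $1\le i\le\nu$, the decomposition $\a=\sgn(\a)\ep+p(\a)$ yields $\sgn(\a_0)=1$, $p(\a_0)=0$, while $\sgn(\a_i)=-1$ and $p(\a_i)=\sg_i$ for $i\ge1$. In particular every occurrence of $\a_0$ contributes nothing to the sum because $p(\a_0)=0$, so only the occurrences of $\a_1,\dots,\a_\nu$ are relevant, and an occurrence of $\a_i$ at position $l$ contributes $(-1)^{k-l}\sgn(\a_i)p(\a_i)=-(-1)^{k-l}\sg_i$ to $T(w)$.

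The remaining step is to organize this sum by the parity of the position $l$. If $l$ is odd then $(-1)^{k-l}=-(-1)^k$, so the contribution to the coefficient of $\sg_i$ is $-(-1)^{k-l}=(-1)^k$; if $l$ is even then $(-1)^{k-l}=(-1)^k$, so the contribution is $-(-1)^k$. Summing over the $P_i$ occurrences of $\a_i$ in odd positions and the $N_i$ occurrences in even positions gives the coefficient of $\sg_i$ as
\begin{equation*}
m_i=(-1)^kP_i-(-1)^kN_i=(-1)^k(P_i-N_i),
\end{equation*}
and since $\vep(w)=(-1)^k$ by the definition of $\vep$, this is exactly the claimed identity. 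Because $T$ is already known to be independent of the chosen expression, the formula automatically holds for the given word.

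The computation is entirely routine, so there is no substantial obstacle; the only point demanding care is the bookkeeping of the alternating sign $(-1)^{k-l}$, whose value depends on the parity of the position $l$ rather than on $l$ itself. Getting this parity accounting right is what turns the single exponent $(-1)^{k-l}$ into the clean split between odd positions (counted by $P_i$) and even positions (counted by $N_i$), and thereby produces the difference $P_i-N_i$.
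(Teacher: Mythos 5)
Your proof is correct and follows essentially the same route as the paper: both apply (\ref{zpartexp}) to the given word, substitute $\sgn(\a_0)=1$, $p(\a_0)=0$ and $\sgn(\a_i)=-1$, $p(\a_i)=\sg_i$, and then sort the surviving terms by the parity of the position to obtain $(-1)^k(P_i-N_i)$. The paper merely packages the same bookkeeping via the coordinate functions $p_i$ and Kronecker deltas, so there is no substantive difference.
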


\begin{proof}
From (\ref{zpartexp}), we have
\begin{eqnarray*}
T(w)&=&T(w_{\a_{j_1}}\cdots w_{\a_{j_k}})\\
&=&\sum_{s=1}^k(-1)^{k-s}\sgn(\a_{j_s})p(\a_{j_s}).
\end{eqnarray*}
For $1\leq i\leq\nu$, let $p_i:\v^0\longrightarrow\mathbb{R}$ be $i$th coordinate function with respect to $\ba^0$. Thus $p_i(\sg_j)=\d_{ij}$, where $\d$ is  the Kronecker delta function. Then we have
\begin{eqnarray*}
m_i&=&p_i(T(w))\\
&=&\sum_{s=1}^k(-1)^{k-s}\sgn(\a_{j_s})\d_{ij_s}\\
&=&(-1)^k\sum_{s=1}^k(-1)^{s-1}\d_{ij_s}\\
&=&(-1)^k(P_i-N_i).
\end{eqnarray*}
\end{proof}

\begin{lem}\label{reflenbund}
Let $\sg=\sum_{i=1}^\nu m_i\sg_i\in\Lam$ and $w_{\a_{j_1}}\cdots w_{\a_{j_k}}$ be an expression of $w_{\ep+\sg}$.
\begin{itemize}
\item[(i)] If $m^+\geq-m^-$, then $|\hti(\sg)|\leq k-1$.
\item[(ii)] If $m^+<-m^-$, then $|\hti(\sg)|\leq k+1$.
\end{itemize}
\end{lem}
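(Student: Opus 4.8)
The plan is to convert both claimed inequalities into a single counting statement about the positions occupied by the given expression, exploiting the bookkeeping invariants $\vep$ and $T$ already attached to $\w$. First I would record the two invariants of $w_{\ep+\sg}$ itself. Since it is a single reflection based on $\ep+\sg\in\rtid^\times$, the definition of $\vep$ gives $\vep(w_{\ep+\sg})=-1$, while (\ref{zpartexp}) gives $T(w_{\ep+\sg})=\sg$. Consequently, for the expression $w_{\a_{j_1}}\cdots w_{\a_{j_k}}$ of $w_{\ep+\sg}$ with respect to $\Pi_0$, the parity relation $(-1)^k=\vep(w_{\ep+\sg})=-1$ forces $k$ to be odd. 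Feeding $T(w_{\ep+\sg})=\sg=\sum_{i=1}^\nu m_i\sg_i$ into Lemma \ref{refNum} yields, for $1\le i\le\nu$,
$$m_i=\vep(w_{\ep+\sg})(P_i-N_i)=N_i-P_i,$$
where $P_i$ (resp. $N_i$) counts the occurrences of $w_{\a_i}$ in odd (resp. even) positions.

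The crucial step, and the one that supplies the factor $2$ in the conclusion, is a global count of positions. By (\ref{PNlen}) the total length is $k=\sum_{i=0}^\nu(P_i+N_i)$; writing $P:=\sum_{i=0}^\nu P_i$ and $N:=\sum_{i=0}^\nu N_i$, these are exactly the numbers of odd and of even positions among $1,\dots,k$. Since $k$ is odd, there is precisely one more odd than even position, so $P=(k+1)/2$ and $N=(k-1)/2$. I expect this elementary observation to be the heart of the argument; everything else is an estimation against the bounds $P$ and $N$.

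Finally I would split into the two cases of Definition \ref{ht}, noting that $\sg$ has $\sgn(\sg)=0$, so that $\hti(\sg)=\hti_{\ba^0}(\sg)$ and hence $|\hti(\sg)|=2m^+$ when $m^+\ge -m^-$ and $|\hti(\sg)|=2|m^-|$ when $m^+<-m^-$. In case (i), using $m_i=N_i-P_i$ and restricting the sum to indices with $m_i>0$,
$$m^+=\sum_{m_i>0}(N_i-P_i)\le\sum_{m_i>0}N_i\le N=\tfrac{k-1}{2},$$
whence $|\hti(\sg)|=2m^+\le k-1$. In case (ii), symmetrically,
$$|m^-|=\sum_{m_i<0}(P_i-N_i)\le\sum_{m_i<0}P_i\le P=\tfrac{k+1}{2},$$
giving $|\hti(\sg)|=2|m^-|\le k+1$. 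The discarding of the $-P_i$ (resp. $-N_i$) terms and the passage from the restricted sum to $N$ (resp. $P$) are both in the favourable direction, so no sharper case analysis is required; the only genuine input beyond Lemma \ref{refNum} is the parity of $k$ together with the position count $P-N=1$.
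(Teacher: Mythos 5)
Your proof is correct and follows essentially the same route as the paper's own argument: $\vep(w_{\ep+\sg})=-1$ forces $k$ odd, Lemma \ref{refNum} gives $m_i=N_i-P_i$, and the position count (the paper writes $\sum_{i=0}^\nu P_i=\sum_{i=0}^\nu N_i+1$, equivalent to your $P=(k+1)/2$, $N=(k-1)/2$) yields the bounds $2m^+\leq k-1$ and $2|m^-|\leq k+1$ in the two cases of Definition \ref{ht}, exactly as in the paper. The only differences are presentational.
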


\begin{proof}
Since $\vep(w_{\ep+\sg})=-1$, $k$ is odd. Since for every odd position in the expression, except for the last one, there is an even position, we have $\sum_{i=0}^\nu P_i=\sum_{i=0}^\nu N_i+1$.

(i) Let $J$ be the set of $0\leq i\leq \nu$ for which $m_i>0$. From Definition \ref{ht} and Lemma \ref{refNum}, we have
$$|\hti(\sg)|=2m^+=2(-1)^k\sum_{i\in J}(P_i-N_i)=2\sum_{i\in J}(N_i-P_i)\leq 2\sum_{i\in J}N_i\leq2\sum_{i=0}^\nu N_i.$$
Since $\sum_{i=0}^\nu P_i=\sum_{i=0}^\nu N_i+1$, using (\ref{PNlen}), we have $2\sum_{i=0}^\nu N_i\leq k-1$. Thus
$$|\hti(\sg)|\leq2\sum_{i=0}^\nu N_i\leq k-1.$$

(ii) Let $J$ be the set of $0\leq i\leq \nu$ for which $m_i<0$. From Definition \ref{ht} and Lemma \ref{refNum}, we have
$$|\hti(\sg)|=-2m^-=-2(-1)^k\sum_{i\in J}(P_i-N_i)=2\sum_{i\in J}(P_i-N_i)\leq 2\sum_{i\in J}P_i\leq2\sum_{i=0}^\nu P_i.$$
Since $\sum_{i=0}^\nu P_i=\sum_{i=0}^\nu N_i+1$, using (\ref{PNlen}), we have
$$|\hti(\sg)|\leq2\sum_{i=0}^\nu P_i=\sum_{i=0}^\nu P_i+\sum_{i=0}^\nu N_i+1\leq k+1.$$
\end{proof}

\begin{lem}\label{wlenbund}
Let $\sg=\sum_{i=1}^\nu m_i\sg_i\in\Lam$ and $w_{\a_{j_1}}\cdots w_{\a_{j_k}}$ be an expression of $w=w_\ep w_{\ep+\sg}$. Then we have $$|\hti(\sg)|\leq k.$$
\end{lem}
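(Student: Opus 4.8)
The plan is to follow the proof of Lemma \ref{reflenbund} almost verbatim, the only change being the parity relation forced by $w=w_\ep w_{\ep+\sg}$ being an \emph{even} element. First I would record two preliminary observations. Since $\vep(w)=\vep(w_\ep)\vep(w_{\ep+\sg})=1$, every expression of $w$ with respect to $\Pi_0$ has even length, so $k$ is even; hence the number of odd positions equals the number of even positions, which gives
$$\sum_{i=0}^\nu P_i=\sum_{i=0}^\nu N_i=\frac{k}{2}.$$
This replaces the relation $\sum P_i=\sum N_i+1$ that was available in the odd-length situation of Lemma \ref{reflenbund}. Second, by Lemma \ref{antihom} (equivalently by Proposition \ref{W0A0iso}) we have $T(w)=T(w_\ep w_{\ep+\sg})=\sg=\sum_{i=1}^\nu m_i\sg_i$, and then Lemma \ref{refNum} yields $m_i=\vep(w)(P_i-N_i)=P_i-N_i$ for $1\leq i\leq\nu$.

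With these in hand I would split into the two cases of Definition \ref{ht}. Because $\sg\in\Lam$ has $\sgn(\sg)=0$, that definition gives $\hti(\sg)=\hti_{\ba^0}(\sg)$, so $|\hti(\sg)|=2m^+$ when $m^+\geq -m^-$ and $|\hti(\sg)|=-2m^-$ otherwise. In the first case, setting $J=\{1\leq i\leq\nu\mid m_i>0\}$, I would estimate
$$|\hti(\sg)|=2m^+=2\sum_{i\in J}(P_i-N_i)\leq 2\sum_{i\in J}P_i\leq 2\sum_{i=0}^\nu P_i=k.$$
In the second case, with $J=\{1\leq i\leq\nu\mid m_i<0\}$, the same chain of inequalities applied to $-2m^-=2\sum_{i\in J}(N_i-P_i)\leq 2\sum_{i=0}^\nu N_i=k$ finishes the bound.

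The step I would check most carefully --- and the only place where anything could go wrong --- is the bookkeeping that yields $\sum_{i=0}^\nu P_i=\sum_{i=0}^\nu N_i=k/2$. It relies on $w$ being even (so $k$ is even) together with the fact that $p(\a_0)=p(\ep)=0$, so the index $i=0$ contributes to the position counts $P_0,N_0$, and hence to $\sum_{i=0}^\nu P_i$, but not to $T(w)$. This mismatch between the index range appearing in $T(w)$ and the full index range appearing in the position sums is precisely what makes the two nested inequalities collapse to $k$; everything else is a direct transcription of the proof of Lemma \ref{reflenbund}.
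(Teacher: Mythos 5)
Your proposal is correct and follows essentially the same argument as the paper: evenness of $k$ (from $\vep(w)=1$) gives $\sum_{i=0}^\nu P_i=\sum_{i=0}^\nu N_i=k/2$, and then $T(w)=\sg$ together with Lemma \ref{refNum} yields exactly the paper's two-case chain of inequalities bounding $2m^+$ (resp.\ $-2m^-$) by $k$. Your only deviation, indexing $J$ over $1\leq i\leq\nu$ rather than the paper's $0\leq i\leq\nu$, is harmless and in fact slightly cleaner, since $m_0$ is not defined and $p(\a_0)=0$ means the index $0$ never contributes to $T(w)$ anyway.
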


\begin{proof}
Since $\vep(w)=1$, $k$ is even. Since for each odd (even) position in the expression of $w$ there is an even (odd) position, we have $\sum_{i=0}^\nu P_i=\sum_{i=0}^\nu N_i$. Notice that $T(w)=\sg$. First let $m^+\geq-m^-$ and $J$ be the set of $0\leq i\leq \nu$ for which $m_i>0$. By Lemma \ref{refNum}, using (\ref{PNlen}), we have
$$|\hti(\sg)|=2m^+=2\sum_{i\in J}(P_i-N_i)\leq2\sum_{i\in J}P_i\leq2\sum_{i=0}^\nu P_i=k.$$
Next let $m^+<-m^-$ and $J$ be the set of $0\leq i\leq \nu$ for which $m_i<0$. Again, by Lemma \ref{refNum}, using (\ref{PNlen}), we have
$$|\hti(\sg)|=-2m^-=-2\sum_{i\in J}(P_i-N_i)=2\sum_{i\in J}(N_i-P_i)\leq2\sum_{i\in J}N_i\leq2\sum_{i=0}^\nu N_i=k.$$
\end{proof}

\begin{pro}\label{reflectionL}
For $\a\in\rtid^\times$, we have
$$\ell_{\Pi_0}(w_\a)=|\hti(\a)|.$$
In particular, if $\a=\sum_{i=0}^\nu n_i\a_i$, we have
$\ell_{\Pi_0}(w_\a)=\sum_{i=0}^\nu|n_i|$.
\end{pro}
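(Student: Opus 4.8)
$$\textbf{Proof proposal.}$$

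The plan is to prove the identity $\ell_{\Pi_0}(w_\a)=|\hti(\a)|$ for $\a\in\rtid^\times$ by establishing the two inequalities $\ell_{\Pi_0}(w_\a)\geq|\hti(\a)|$ and $\ell_{\Pi_0}(w_\a)\leq|\hti(\a)|$ separately. The lower bound is essentially already packaged in the preceding lemmas, while the upper bound requires exhibiting an explicit expression of the prescribed length.

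For the lower bound, I would write $\a=\sgn(\a)\ep+p(\a)$ with $p(\a)=\sum_{i=1}^\nu m_i\sg_i$ and split into the two sign cases. If $\sgn(\a)=1$, then by Theorem \ref{uniexpspro} (noting $\vep(w_\a)=-1$, so $\d_{\vep(w_\a),1}=1$) we have $w_\a=w_\ep w_{\ep+p(\a)}$; any expression of $w_\a$ of length $k$ is then an expression of the form handled in Lemma \ref{wlenbund}, which is actually an expression of $w_\ep w_{\ep+\sg}$ with $\sg=p(\a)$, and since $\hti(\a)=1+\hti_{\ba^0}(p(\a))$ in the relevant ranges one obtains $|\hti(\a)|\leq k$. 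If $\sgn(\a)=-1$, then $w_\a=w_{\ep+\sg}$ directly (here $\vep(w_\a)=-1$ forces the Kronecker term to vanish via $T(w_\a)$), and Lemma \ref{reflenbund} gives $|\hti(p(\a))|\leq k\mp 1$ according to whether $m^+\geq -m^-$ or $m^+<-m^-$; comparing with Definition \ref{ht}, where the correction $k=-1$ in the height is precisely what converts $|\hti(p(\a))|\mp 1$ into $|\hti(\a)|$, yields $|\hti(\a)|\leq k$ in every case. Taking the infimum over all expressions gives $\ell_{\Pi_0}(w_\a)\geq|\hti(\a)|$.

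For the upper bound I would produce an explicit expression realizing the length. By Proposition \ref{coht}, writing $\a=\sum_{i=0}^\nu n_i\a_i$ we have $\sum_{i=0}^\nu|n_i|=|\hti(\a)|$. The idea is to build $w_\a$ as an alternating product of the generators $w_{\a_i}$ in which each generator $w_{\a_i}$ appears with total signed multiplicity matching $n_i$: using Proposition \ref{baby W}(ii) to convert powers $(w_{\a_i+\sigma}w_{\a_i})^{k}$ into controlled products, and Lemma \ref{refNum}, which says that in any expression the coefficient $m_i$ of $T(w)$ equals $\vep(w)(P_i-N_i)$, I can arrange an expression in which $P_i-N_i$ and the parities are forced so that exactly $\sum|n_i|$ generators occur, with the correct $\vep(w_\a)=-1$ and the correct $T(w_\a)=p(\a)$. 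Concretely, one assigns $|n_i|$ copies of $w_{\a_i}$ all in odd or all in even positions according to the sign of $n_i$, interleaving them so that the total position count is $\sum|n_i|$; Theorem \ref{uniexpspro} then confirms that the resulting element is $w_\a$ since its sign and $T$-value are correct. This gives an expression of length $\sum_{i=0}^\nu|n_i|=|\hti(\a)|$, hence $\ell_{\Pi_0}(w_\a)\leq|\hti(\a)|$.

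The main obstacle I anticipate is the bookkeeping in the upper-bound construction: one must simultaneously control the parity of the total number of letters (to get $\vep(w_\a)=-1$), the signed difference $P_i-N_i$ for each $i$ (to get the correct coefficients of $T(w_\a)$ via Lemma \ref{refNum}), and the constraint relating $\sum P_i$ to $\sum N_i$ that is forced by $\vep(w_\a)=-1$, all while not using more than $\sum|n_i|$ positions. The delicate point is that a single generator cannot occupy both an odd and an even position freely, so one must verify that the sign pattern of the $n_i$ together with the relation $\sum_i P_i=\sum_i N_i+1$ is genuinely compatible with the target coefficients; I expect this to reduce, via Proposition \ref{coht} and the case analysis already appearing in Lemma \ref{reflenbund} and Lemma \ref{wlenbund}, to checking that the surplus of odd positions can always be absorbed into the $\a_0=\ep$ direction, which is exactly the slot where the $\sgn(\a)\ep$ component of $\a$ lives.
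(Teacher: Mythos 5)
Your overall architecture (lower bound from the counting lemmas, upper bound from an explicit word, with Proposition \ref{coht} converting $\sum_i|n_i|$ into $|\hti(\a)|$) matches the paper's, but your lower-bound branch for $\sgn(\a)=1$ contains a genuine error. Since $\vep(w_\a)=-1$, the Kronecker exponent in Theorem \ref{uniexpspro} is $\d_{\vep(w_\a),1}=0$, not $1$: the theorem gives $w_\a=w_{\ep+T(w_\a)}=w_{\ep+p(\a)}$ (a tautology for $\a=\ep+p(\a)$), \emph{not} $w_\a=w_\ep w_{\ep+p(\a)}$. The element $w_\ep w_{\ep+p(\a)}$ is even while $w_\a$ is odd, so no expression of $w_\a$ is ``an expression of the form handled in Lemma \ref{wlenbund}'': that lemma's proof requires $k$ even and $\sum_i P_i=\sum_i N_i$, both false for words representing a reflection. (Even granting the lemma, its bound $|\hti(\sg)|\le k$ only yields $|\hti(\a)|\le k+1$ when $m^+\ge|m^-|$, where $|\hti(\a)|=|\hti(\sg)|+1$, so an extra parity step would still be missing.) The repair is that your $\sgn(\a)=-1$ branch is in fact the right template for \emph{both} cases: since $w_\a=w_{-\a}$ and $|\hti(\a)|=|\hti(-\a)|$, one may assume $\a=\ep+\sg$ and apply Lemma \ref{reflenbund} directly, which is exactly what the paper does — every expression of $w_\a$ has odd length, so a hypothetical expression of length $p<|\hti(\a)|$ satisfies $p\le|\hti(\a)|-2$, contradicting Lemma \ref{reflenbund}(i) when $m^+\ge -m^-$ (where $|\hti(\a)|=|\hti(\sg)|+1$) and Lemma \ref{reflenbund}(ii) when $m^+<-m^-$ (where $|\hti(\a)|=|\hti(\sg)|-1$).

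Your upper bound is a plan rather than a proof: the ``bookkeeping'' you flag as an anticipated obstacle is the actual content of this half, and the paper resolves it concretely. For $\a=\ep+\sg$ with $m^+\ge|m^-|$, take $k=2m^++1$ slots ($m^++1$ odd, $m^+$ even); place $m_i$ copies of $w_{\a_i}$ in even slots for each $i$ with $m_i>0$ (filling all even slots), $-m_i$ copies in odd slots for each $i$ with $m_i<0$, and $w_{\a_0}=w_\ep$ in every leftover odd slot; when $m^+<|m^-|$ the parities swap and the $\a_0$'s fill leftover even slots. One then verifies $w(\ep)=-\ep-2\sg=w_\a(\ep)$ via (\ref{waction}), which pins down $w=w_\a$ (equivalently, via $\vep$ and $T$ as you suggest). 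Note that your uniform rule ``$|n_i|$ copies of $w_{\a_i}$ all in odd or all in even positions according to the sign of $n_i$'' cannot be correct as stated for $i=0$: since $p(\a_0)=0$, the $\a_0$-letters are invisible to $T$ (Lemma \ref{refNum} constrains only the coordinates $i\ge1$), and their number and parity are dictated by the leftover slot count — indeed in the paper's first case one has $n_0>0$ with the $\a_0$'s in \emph{odd} slots, while the generators $\a_i$ with $n_i>0$, $i\ge1$, sit in \emph{even} slots. The appeal to Proposition \ref{baby W}(ii) is unnecessary. In short: right strategy, but the $\sgn(\a)=1$ lower bound must be rerouted through Lemma \ref{reflenbund}, and the word realizing the length must actually be specified and verified rather than deferred.
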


\begin{proof}
We have $w_\a=w_{-\a}$ and $|\hti(\a)|=|\hti(-\a)|$. Thus without loss of generality we may assume that $\sgn(\a)=1$ or equivalently $\a=\ep+\sg$, for some $\sg\in\Lam$. First we show that $w_\a$ has an expression of length $|\hti(\a)|$ with respect to $\Pi_0$. One should notice that $\vep(w_\a)=-1$, where $\vep$ is the homomorphism defined in Section \ref{ARS}. This means that length of any expression of $w_\a$ is odd. Also, from Definition \ref{ht}, we have $|\hti(\a)|$ is odd. Let
$$\sg=\sum_{i=1}^\nu m_i\sg_i,$$
as in (\ref{zht}) and let $J^+$ (resp. $J^-$) be the set of $1\leq i\leq\nu$ for which $m_i>0$ (resp. $m_i<0$). Then we have
$$m^+=\sum_{i\in J^+}m_i\quad\mathrm{and}\quad m^-=\sum_{i\in J^-}m_i.$$ To build an expression for $w_\a$ of required length, we consider two cases $m^+\geq|m^-|$ and $m^+<|m^-|$ separately.

Case(i) $m^+\geq|m^-|$. Let $k:=|\hti(\a)|=2m^++1$. We introduce an expression $w_{\b_1}\cdots w_{\b_k}$, where $\b_i\in\Pi_0$, $1\leq i\leq k$, as follows. Since $k=2m^++1$, we must have $m^++1$ odd and $m^+$ even positions in the expression. For each $i\in J^+$, we choose $m_i$ even positions and in each such position $r$, we put $\b_r=\a_i$. Since the number of even positions is $m^+=\sum_{i\in J^+}m_i$, there is no even unfilled position in the expression. For each $i\in J^-$, we choose $-m_i$ odd positions and for each such position $r$, we put $\b_r=\a_i$. Since the number of odd positions which are filled, is $|m^-|=\sum_{i\in J^-}-m_i$ and $|m^-|\leq m^+<m^++1$, there is at least one unfilled odd position. For any such position $r$, we put $\b_r=\a_0$.

Case(ii) $m^+<|m^-|$. Let $k:=|\hti(\a)|=-2m^--1$. Again, we introduce an expression $w_{\b_1}\cdots w_{\b_k}$, where $\b_i\in\Pi_0$, $1\leq i\leq k$, as follows. Since $k=-2m^--1$, we must have $-m^-$ odd and $-m^--1$ even positions in the expression. For each $i\in J^-$, we choose $-m_i$ odd positions and in each such position $r$, we put $\b_r=\a_i$. Since the number of odd positions is $-m^-=\sum_{i\in J^-}-m_i$, there is no odd unfilled position in the expression. For each $i\in J^+$, we choose $m_i$ even positions and for each such position $r$, we put $\b_r=\a_i$. Up until now, the number of filled even positions is $m^+$. On the other hand, $m^+<|m^-|$ and the total number of even positions in the expression is $|m^-|-1$. Thus, if $|m^-|\not=m^++1$, there is at least one even unfilled position in the expression. For any such position $r$, we put $\b_r=\a_0$.

From (\ref{waction}), in both cases we have
\begin{equation*}
w(\ep)=-\ep-2(\sum_{i\in J^+}m_i\sg_i+\sum_{i\in J^-}m_i\sg_i)=-\ep-2\sg=w_\a(\ep).
\end{equation*}
Thus we have $w=w_\a$, i.e., the given expressions are expressions of $w_\a$ in both cases.

Now we show that $w_\a$ does not have an expression of smaller length. Assume that $w_\a$ has an expression of length $p$, with respect to $\Pi_0$, where $p<|\hti(\a)|$. So $p\leq|\hti(\a)|-2$. If $m^+\geq-m^-$, then $|\hti(\a)|=|\hti(\sg)|+1$. Thus $p\leq|\hti(\a)|-2=|\hti(\sg)|-1$. On the other hand, by Lemma \ref{reflenbund} (i), we have $p\geq|\hti(\sg)|+1$. This is a contradiction. If $m^+<-m^-$, then $|\hti(\a)|=|\hti(\sg)|-1$.  Thus $p\leq|\hti(\a)|-2=|\hti(\sg)|-3$. Again, by Lemma \ref{reflenbund} (ii), we have $p\geq|\hti(\sg)|-1$, which is a contradiction.
\end{proof}

Recall that according to Theorem \ref{uniexpspro}, each element $w$ of $\w$ has an expression of the form $w_\ep^{\d_{\vep(w),1}}w_{\ep+T(w)}$, where $\vep$ and $T$ are the maps defined in Section \ref{ARS}.

\begin{thm}\label{LFT}
For $w\in\w$, we have
$$\ell_{\Pi_0}(w)=|\hti(\ep+T(w))|-\sgn(\hti(\ep+T(w)))t,$$
where $t=\d_{\vep(w),1}$, and $\sgn(\hti(\ep+T(w)))$ is the sign of $\hti(\ep+T(w))$ as an integer.
\end{thm}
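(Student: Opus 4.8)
The plan is to argue by cases on the sign $\vep(w)$, which controls both the exponent $t=\d_{\vep(w),1}$ and the parity of any expression. By Theorem \ref{uniexpspro} I may work throughout with the canonical form $w=w_\ep^{\,t}w_{\ep+T(w)}$. If $\vep(w)=-1$, then $t=0$ and $w=w_{\ep+T(w)}$ is a single reflection based on the root $\ep+T(w)\in\rtid^\times$; here the asserted formula collapses to $\ell_{\Pi_0}(w)=|\hti(\ep+T(w))|$, which is exactly Proposition \ref{reflectionL}. So the odd case is immediate and requires no further work.

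The content lies in the even case $\vep(w)=1$, where $t=1$ and $w=w_\ep w_{\ep+\sg}$ with $\sg:=T(w)$. I would first reduce the right-hand side to a height living in the radical only. Since $\sgn(\ep+\sg)=1\neq-1$, Definition \ref{ht} gives $\hti(\ep+\sg)=1+\hti(\sg)$, where $\hti(\sg)=\hti_{\ba^0}(\sg)$ is even. Splitting on whether $\hti(\sg)\geq0$ or $\hti(\sg)\leq-2$, a one-line computation shows $|\hti(\ep+\sg)|-\sgn(\hti(\ep+\sg))=|\hti(\sg)|$ in both regimes (the degenerate value $\sg=0$, i.e.\ $w=1$, falls under the first and gives $0$). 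Hence the goal becomes the clean statement $\ell_{\Pi_0}(w_\ep w_{\ep+\sg})=|\hti(\sg)|$.

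For this equality the lower bound $\ell_{\Pi_0}(w)\geq|\hti(\sg)|$ is precisely Lemma \ref{wlenbund}. For the matching upper bound I would exhibit an explicit expression of length $k:=|\hti(\sg)|$, imitating the construction in the proof of Proposition \ref{reflectionL}. Writing $\sg=\sum_{i=1}^\nu m_i\sg_i$ and noting $k$ is even (as $\vep(w)=1$), I allocate $k/2$ odd and $k/2$ even positions, place each $\a_i$ with $m_i>0$ into $m_i$ odd positions, each $\a_i$ with $m_i<0$ into $|m_i|$ even positions, and fill the remaining slots with $\a_0=\ep$; the two regimes $m^+\geq|m^-|$ and $m^+<|m^-|$ decide whether the leftover slots are even or odd, and a direct count gives $\sum_i P_i=\sum_i N_i=k/2$, so the allocation is consistent. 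To see that the resulting word really equals $w$ and not merely has the right length, I would compute its invariants: its length is even so its $\vep$-value is $1$, and by Lemma \ref{refNum} the $i$-th coordinate of its $T$-value is $P_i-N_i=m_i$, whence its $T$-value is $\sg$. Proposition \ref{WRtCorr} then forces the word to equal $w_\ep w_{\ep+\sg}=w$.

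The main obstacle is the bookkeeping in this upper-bound construction: one must verify that the odd/even position counts match the positive and negative parts $m^+$ and $|m^-|$ of $\sg$ exactly, and that the filler reflections $w_\ep$ occupy positions of the correct parity so that both $\vep$ and $T$ come out right. As in Proposition \ref{reflectionL}, this is where the case distinction $m^+\geq|m^-|$ versus $m^+<|m^-|$ is genuinely needed, and it is the only step beyond routine verification.
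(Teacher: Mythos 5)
Your proposal is correct, and its skeleton --- the case split on $\vep(w)$, the odd case disposed of by Proposition \ref{reflectionL}, and the lower bound in the even case via Lemma \ref{wlenbund} --- is the same as the paper's. Where you genuinely diverge is the even-case upper bound. The paper recycles the reduced expression for $w_{\ep+T(w)}$ built in Proposition \ref{reflectionL}, prepends $w_\ep$, and splits into regimes: when $m^+\geq|m^-|$ that reduced word contains a filler $w_\ep$ in an odd position, so after prepending there is a $w_\ep$ in an even position, and Corollary \ref{action cor} is used to shuffle it next to the leading $w_\ep$ and cancel, cutting the length from $k+1$ to $k-1$; when $m^+<|m^-|$ the prepended word of length $k+1$ is already optimal. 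You instead first normalize the right-hand side to the clean form $|\hti(T(w))|$ (checking $|\hti(\ep+\sg)|-\sgn(\hti(\ep+\sg))=|\hti(\sg)|$ in both regimes, including $\sg=0$), then construct an even-length word of length exactly $|\hti(\sg)|$ from scratch --- positive coordinates in odd slots, negative in even slots, $\a_0$ fillers of whichever parity is left over --- and identify it with $w$ not by letting it act on $\ep$ but by computing its invariants: even length gives $\vep=1$, Lemma \ref{refNum} gives $T$-coordinates $P_i-N_i=m_i$ (the fillers are harmless since $p(\a_0)=0$), and the injectivity in Proposition \ref{WRtCorr} forces equality with $w_\ep w_{\ep+\sg}$. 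Your bookkeeping is consistent: with $k=|\hti(\sg)|$ even, the $m^+$ odd slots are exactly filled when $m^+\geq|m^-|$ and the $|m^-|$ even slots are exactly filled when $m^+<|m^-|$, so $\sum_i P_i=\sum_i N_i=k/2$ as claimed. Your route buys a cleaner argument --- no word-rewriting via Corollary \ref{action cor} is needed, and Corollary \ref{W0len} drops out en route rather than being derived afterwards from the theorem; the paper's route, in exchange, exhibits how reduced words for $w_{\ep+\sg}$ and $w_\ep w_{\ep+\sg}$ are related, which it later exploits when matching against the affine Kac--Moody length formula.
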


\begin{proof}
By Theorem \ref{uniexpspro}, for $t=0$ we have $w=w_{\ep+T(w)}$. This is the case which is discussed in Proposition \ref{reflectionL}. So, let $t=1$. In this case $\vep(w)=1$ and therefore $\ell_{\Pi_0}(w)$ is even. Let $w_{\a_{i_1}}\dots w_{\a_{i_k}}$ be the reduced expression for $w_{\ep+T(w)}$ given in the proof of Proposition \ref{reflectionL}. Also, let
$$T(w)=\sum_{i=1}^\nu m_i\sg_i,\quad m^+:=\sum_{\substack{i=1\\m_i>0}}^\nu m_i\andd m^-:=\sum_{\substack{i=1\\m_i<0}}^\nu m_i,$$
as in (\ref{zht}). Then $w=w_\ep w_{\a_{i_1}}\dots w_{\a_{i_k}}$.

If $m^+\geq|m^-|$ then there is odd $1\leq j\leq k$ for which $w_{\a_{i_j}}=w_\ep$. Thus in the given expression of $w$ there is at least one $w_\ep$ in an even position. According to Corollary \ref{action cor}, we have
$$w=w_\ep w_\ep w_{\a_{i_2}}\cdots w_{\a_{i_{j-1}}}w_{\a_{i_1}}w_{\a_{i_{j+1}}}\cdots w_{\a_{i_k}}=w_{\a_{i_2}}\cdots w_{\a_{i_{j-1}}}w_{\a_{i_1}}w_{\a_{i_{j+1}}}\cdots w_{\a_{i_k}}.$$
Since $w_{\a_{i_1}}\dots w_{\a_{i_k}}$ is a reduced expression of $w_{\ep+T(w)}$, from Corollary \ref{action cor}, we have $\a_{i_{j-1}}\not=\a_{i_1}\not=\a_{i_{j+1}}$. Let $w$ has an expression of length $p$, where $p<|\hti(\ep+T(w))|-1$. Thus $p<|\hti(T(w))|$. This contradicts Lemma  \ref{wlenbund}. Thus $\ell_{\Pi_0}(w)=|\hti(\ep+T(w))|-1$.

If $m^+<|m^-|$ then there is not any $w_\ep$ in an odd position of the reduced expression of $w_{\ep+T(w)}$, which is given in Proposition \ref{reflectionL}. In this case $w$ has an expression of length $|\hti(\ep+T(w))|+1$. Let $w$ has an expression of length $p<|\hti(\ep+T(w))|+1$. Thus $p<|\hti(T(w))|-1+1=|\hti(T(w)|$, which is a contradiction compared with Lemma \ref{wlenbund}. Thus $\ell_{\Pi_0}(w)=|\hti(\ep+T(w))|+1$.
\end{proof}

\begin{cor}\label{W0len}
Let $w\in\w^0$. We have
$$\ell_{\Pi_0}(w)=|\hti(T(w))|.$$
In particular, if $T(w)=\sum_{i=0}^\nu n_i\a_i$ then $\ell_{\Pi_0}(w)=\sum_{i=0}^\nu|n_i|$.
\end{cor}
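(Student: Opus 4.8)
The plan is to derive the corollary directly from Theorem \ref{LFT} by specializing to even elements, where the Kronecker-delta term becomes active, and then to simplify the resulting expression through a short sign analysis. First I would record that, since $w\in\w^0=\Ker(\vep)$, we have $\vep(w)=1$ and hence $t=\d_{\vep(w),1}=1$ in the formula of Theorem \ref{LFT}. Consequently
$$\ell_{\Pi_0}(w)=|\hti(\ep+T(w))|-\sgn(\hti(\ep+T(w))).$$
It therefore remains only to show that the right-hand side equals $|\hti(T(w))|$.

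Next I would carry out the key computation, setting $\sg:=T(w)=\sum_{i=1}^\nu m_i\sg_i\in\Lam$. Because $\sgn(\ep+\sg)=1\ne-1$, Definition \ref{ht} always selects its ``otherwise'' branch, giving $\hti(\ep+\sg)=1+\hti_{\ba^0}(\sg)$; on the other hand, viewing $\sg$ as an isotropic root (so $k=0$) the same definition yields $\hti(\sg)=\hti_{\ba^0}(\sg)$. I would then split into the two cases of (\ref{zht}). If $m^+\ge|m^-|$, then $\hti_{\ba^0}(\sg)=2m^+\ge0$, so $\hti(\ep+\sg)=1+2m^+>0$, its sign is $+1$, and the right-hand side is $(1+2m^+)-1=2m^+=|\hti(\sg)|$. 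If $m^+<|m^-|$, then $m^-\le-1$, so $\hti_{\ba^0}(\sg)=2m^-\le-2$ and $\hti(\ep+\sg)=1+2m^-\le-1<0$; thus its sign is $-1$ and the right-hand side is $(-1-2m^-)-(-1)=-2m^-=|\hti(\sg)|$. In both cases $\ell_{\Pi_0}(w)=|\hti(T(w))|$, proving the first equality (the degenerate case $\sg=0$, where $w=1$ by Proposition \ref{W0A0iso}, falls into the first case and gives length $0$).

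Finally, for the ``in particular'' statement I would observe that $T(w)\in\Lam=\rtid^0$, so writing $T(w)=\sum_{i=0}^\nu n_i\a_i$ and applying Proposition \ref{coht} with $\a=T(w)$ gives $|\hti(T(w))|=\sum_{i=0}^\nu|n_i|$, whence $\ell_{\Pi_0}(w)=\sum_{i=0}^\nu|n_i|$. There is no serious obstacle here: the only point requiring care is verifying that $\hti(\ep+T(w))$ is nonzero with the correct sign, which rests on the parity observation that $\hti_{\ba^0}(\sg)$ is always even, so that adding $1$ never lands on $0$ and the sign of $\hti(\ep+\sg)$ is governed by that of $\hti_{\ba^0}(\sg)$.
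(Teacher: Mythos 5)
Your proof is correct and follows essentially the same route as the paper's: both specialize Theorem \ref{LFT} to $t=\d_{\vep(w),1}=1$, split into the cases $m^+\geq|m^-|$ and $m^+<|m^-|$ to evaluate $\hti(\ep+T(w))=2m^++1$ or $2m^-+1$ and cancel the sign term, and deduce the final claim from Proposition \ref{coht}. Your extra checks (the parity observation ensuring $\hti(\ep+T(w))\neq0$, and the degenerate case $T(w)=0$ via Proposition \ref{W0A0iso}) are harmless refinements of the same argument.
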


\begin{proof}
Since $\w^0=\Ker(\vep)$, thus $\vep(w)=1$. By Theorem \ref{uniexpspro}, we have $w=w_\ep w_{\ep+T(w)}$. Thus from Theorem \ref{LFT} $\ell_{\Pi_0}(w)=|\hti(\ep+T(w))|-\sgn(\hti(\ep+T(w))$. On the other hand, let $$T(w)=\sum_{i=1}^\nu m_i\sg_i,\quad m^+:=\sum_{\substack{i=1\\m_i>0}}^\nu m_i\andd m^-:=\sum_{\substack{i=1\\m_i<0}}^\nu m_i,$$
as in (\ref{zht}). We consider the following two cases.

First we assume that $m^+\geq|m^-|$. By Definition \ref{ht}, $\hti(\ep+T(w))=2m^++1$. Thus
$$|\hti(\ep+T(w))|-\sgn(\hti(\ep+T(w)))=2m^++1-1=2m^+=|\hti(T(w))|.$$

Now we assume that $|m^-|>m^+$. In this case $\hti(\ep+T(w))=2m^-+1<0$. Thus
$$|\hti(\ep+T(w))|-\sgn(\hti(\ep+T(w)))=-2m^--1+1=-2m^-=|\hti(T(w))|.$$

The last assertion in the statement follows from Proposition \ref{coht}.
\end{proof}

\begin{rem}
The proofs of Proposition \ref{reflectionL} and Theorem \ref{LFT} give rise to an algorithm for constructing expressions of smallest length for given elements of $\w$.
\end{rem}

\section{\bf Orbits and the length function}\setcounter{thm}{0}\label{rbol}
Recall that $A$ is a free abelian group of rank $\nu+1$ and $\ba=\{\ep,\sg_1,\dots,\sg_\nu\}$ is a fixed $\mathbb{Z}$-basis for $A$.
Let $\Pi=\{\a_0,\a_1,\dots,\a_\nu\}$ be a root basis for $R$. From Definition \ref{RootBasis}, for $1\leq j\leq\nu$, we have $$\sg_j=\sum_{i=0}^\nu m_{ji}\a_i,$$ where $m_{ji}\geq0$. For $w\in\w$, let $w(\Pi)=\{w(\a_0),w(\a_1),\dots,w(\a_\nu)\}\subseteq R^\times_b$. Since $w\in\Aut(A)$, $w(\Pi)$ is a $\mathbb{Z}$-basis of $A$.
Also, We have
\begin{equation}\label{wonb}
\sg_j=w(\sg_j)=\sum_{i=0}^\nu m_{ji}w(\a_i)
\end{equation}
Thus $w(\Pi)$ is a root basis for $R$.

\begin{DEF}\label{wconorb}
Let $\Pi$ be a root basis for $R$.

(i) A root basis $\Pi'$ is called a $\w$-conjugate of $\Pi$, if there exists $w\in\w$ for which $\Pi'=w(\Pi)$. Obviously, conjugacy is an equivalence relation on the set of all root bases for $R$.

(ii) The $\w$-orbit of $\Pi$ is the set of all $\w$-conjugates of $\Pi$.
\end{DEF}

In Lemma \ref{ARbasis} it will be shown that a classic root basis of type $A_1$, in the usual sense of Lie theory, is a root basis in our terminology. From \cite[Proposition 5.9]{Ka}, for $\nu=1$, each root basis of $R$ is a $\w$-conjugate of $\Pi_0$ or $-\Pi_0$. It means that any root basis for $R$ is in one of the two $\w$-orbits of $\Pi_0$ and $-\Pi_0$. This is not true for $\nu>1$. In fact when $\nu>1$, the number of $\w$-orbits is not finite. To see this let $N>1$ and consider
\begin{eqnarray*}
\Pi_n=&\{\b_0=\ep+2\sg_1+2\sg_2,\b_1=-\ep-\sg_1-2n\sg_2,\b_2=-\ep-2\sg_1-\sg_2,\\
&\b_3=-\ep-2\sg_1-2\sg_2+\sg_3,\dots,\b_\nu=-\ep-2\sg_1-2\sg_2+\sg_\nu\}.
\end{eqnarray*}
We have $\sg_1=(2n-1)\b_0+\b_1+(2n-2)\b_2$ and $\sg_i=\b_0+\b_i$, for $2\leq i\leq\nu$. It is easy to show that $\Pi_n$ is a $\mathbb{Z}$-linearly independent set. Thus $\Pi_n$ is a root basis for $R$. Since $\sg_1=(2n-1)\b_0+\b_1+(2n-2)\b_2$, using (\ref{wonb}), we conclude that for any $w\in\w$, $\Pi_n\not=w(\Pi_m)$, i.e., $\Pi_m$ and $\Pi_n$ are not $\w$-conjugate, for $m,n>1$ and $m\not=n$.

Here we show that in each orbit if we know the length function with respect to one basis, it is easy to calculate the length function for other root  bases. This is a consequence of Lemma \ref{conlen}.

\begin{lem}\label{wolf}
Let $\Pi$ and $\Pi'$ be two root bases for $R$. If there exists $w_0\in\w$ such that $\Pi'=w_0(\Pi)$, then for $w\in\w$, we have $$\ell_{\Pi'}(w)=\ell_{\Pi}(w_0^{-1}ww_0).$$
\end{lem}

Here we calculate the length function for $\w$ with respect to $w(\Pi_0)$, for $w\in\w$.

\begin{cor}\label{wPLen}
Let $w_0$ be a fixed element of $\w$. For $w\in\w$, we have
$$\ell_{w_0(\Pi_0)}(w)=|\hti(\ep+\tau)|+\sgn(\hti(\ep+\tau))\d_{\vep(w),1},$$
where $\tau=\vep(w_0)(T(w)+(\vep(w)-1)T(w_0))$.
\end{cor}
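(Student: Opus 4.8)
The plan is to transport the problem to the fundamental root basis $\Pi_0$, where Theorem \ref{LFT} already evaluates the length in closed form. Since $w_0(\Pi_0)$ is obtained from $\Pi_0$ by the action of $w_0\in\w$, Lemma \ref{wolf} applies verbatim and gives
$$\ell_{w_0(\Pi_0)}(w)=\ell_{\Pi_0}(w_0^{-1}ww_0).$$
Writing $u:=w_0^{-1}ww_0$, it remains to feed $u$ into Theorem \ref{LFT}, which requires only the two invariants $\vep(u)$ and $T(u)$. The first is immediate: $\vep$ is a homomorphism into $\{-1,1\}$, a group in which every element is its own inverse, so $\vep(u)=\vep(w_0)^2\vep(w)=\vep(w)$. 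Hence the Kronecker-delta factor $\d_{\vep(u),1}=\d_{\vep(w),1}$ is exactly the one in the statement.

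The core step is to identify $\tau:=T(u)$. Applying Lemma \ref{antihom} to the factorisation $u=(w_0^{-1}w)w_0$ and then to $w_0^{-1}w$, and inserting $\vep(w_0^{-1})=\vep(w_0)$ together with the inversion formula $T(w_0^{-1})=-\vep(w_0)T(w_0)$ of Lemma \ref{identities}(i), I obtain after using $\vep(w_0)^2=1$ a single element of $\Lam$,
$$\tau=T(w_0^{-1}ww_0)=\vep(w_0)T(w)+(1-\vep(w))T(w_0);$$
equivalently one may read this off directly from the conjugation identity of Lemma \ref{identities}(ii). Note that the correction term proportional to $T(w_0)$ survives only when $\vep(w)=-1$ and vanishes on $\w^0$. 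Substituting $\vep(u)=\vep(w)$ and this $\tau$ into Theorem \ref{LFT} produces the length as $|\hti(\ep+\tau)|$ corrected by a term $\sgn(\hti(\ep+\tau))\,\d_{\vep(w),1}$, which is the asserted shape; the concluding evaluation in coordinates can then be obtained from Proposition \ref{coht} if wanted.

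I expect the only real difficulty to be the sign bookkeeping in this last step. One must keep straight (a) the direction of conjugation, since $w_0^{-1}ww_0$ and $w_0ww_0^{-1}$ differ by the antihomomorphism correction of Lemma \ref{antihom} and hence by the sign of the $T(w_0)$-term in $\tau$; (b) the factor $\vep(w_0)$ picked up when $T$ is pulled past $w_0$; and (c) the sign of the delta-term carried over from Theorem \ref{LFT}. To pin these down I would run two consistency checks. For $\vep(w)=1$ the element $u$ lies in $\w^0$, the $T(w_0)$-correction drops out, and Corollary \ref{W0len} gives $\ell_{\Pi_0}(u)=|\hti(T(u))|=|\hti(\vep(w_0)T(w))|=|\hti(T(w))|$, which the closed formula must reproduce. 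For $\vep(w)=-1$ the element $u=w_{w_0^{-1}(\ep+T(w))}$ is again a single reflection, so Proposition \ref{reflectionL} gives $\ell_{\Pi_0}(u)=|\hti(w_0^{-1}(\ep+T(w)))|$, computable from (\ref{waction}); matching this against $|\hti(\ep+\tau)|$ fixes the remaining signs.
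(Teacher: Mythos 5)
Your strategy is exactly the paper's: reduce via Lemma \ref{wolf} to $\ell_{\Pi_0}(w_0^{-1}ww_0)$, compute the invariants $\vep$ and $T$ of the conjugate, and substitute into Theorem \ref{LFT}. Moreover, your computation of the key invariant is right: combining Lemma \ref{antihom} with $T(w_0^{-1})=-\vep(w_0)T(w_0)$ (equivalently, Lemma \ref{identities}(ii) with $w_1=w_0^{-1}$) gives
$$T(w_0^{-1}ww_0)=\vep(w_0)T(w)+(1-\vep(w))T(w_0),$$
where the factor $\vep(w_0)$ does \emph{not} multiply the $T(w_0)$-term, because the substitution $T(w_0^{-1})=-\vep(w_0)T(w_0)$ cancels it. Be aware that this differs both from the $\tau=\vep(w_0)(T(w)+(\vep(w)-1)T(w_0))$ in the statement you were asked to prove and from the line $T(w_0^{-1}ww_0)=\vep(w_0)(T(w)+(1-\vep(w))T(w_0))$ in the paper's own proof: each of those misplaces a factor $\vep(w_0)$ on the $T(w_0)$-term and agrees with your (correct) value only for one sign of $\vep(w_0)$ when $\vep(w)=-1$.

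The genuine gap in your write-up is the step you deliberately left open, your item (c): there is nothing to ``pin down'', since Theorem \ref{LFT} reads $\ell_{\Pi_0}(u)=|\hti(\ep+T(u))|-\sgn(\hti(\ep+T(u)))\d_{\vep(u),1}$ with a \emph{minus} sign, so substitution immediately yields
$$\ell_{w_0(\Pi_0)}(w)=|\hti(\ep+\tau)|-\sgn(\hti(\ep+\tau))\d_{\vep(w),1},\qquad \tau=\vep(w_0)T(w)+(1-\vep(w))T(w_0),$$
which is \emph{not} the ``asserted shape'' with a plus sign; the printed corollary cannot be proved as stated. Had you actually executed your two consistency checks, they would have exposed both discrepancies. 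For $\vep(w)=1$: taking $w=1$, the printed formula returns $|\hti(\ep)|+\sgn(\hti(\ep))=2$ instead of $\ell(1)=0$ (the minus sign gives $0$). For $\vep(w)=-1$, where the delta-term is inactive and only $\tau$ matters: taking $w=w_\ep$ and $w_0=w_\ep w_{\ep+\sg_1}$ (so $\vep(w_0)=1$, $T(w_0)=\sg_1$), one has $u=w_0^{-1}ww_0=w_{\ep+2\sg_1}$ by (\ref{conj}), whence $\ell_{\Pi_0}(u)=|\hti(\ep+2\sg_1)|=5$ by Proposition \ref{reflectionL}, while the printed $\tau=-2\sg_1$ would give $|\hti(\ep-2\sg_1)|=3$. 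So your derivation is the paper's argument carried out correctly; finish it by reading the minus sign off Theorem \ref{LFT} and stating the corrected formula, noting that the sign slips lie in the published statement (and in the paper's proof-line for $\tau$), not in your argument.
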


\begin{proof}
From Lemma \ref{wolf}, we have $\ell_{w_0(\Pi_0)}(w)=\ell_{\Pi_0}(w_0^{-1}ww_0)$. Using Lemma \ref{identities}, we have
$$\vep(w_0^{-1}ww_0)=\vep(w)\andd T(w_0^{-1}ww_0)=\vep(w_0)(T(w)+(1-\vep(w))T(w_0)).$$
Therefore from Theorem \ref{LFT}, for $\tau=\vep(w_0)(T(w)+(1-\vep(w))T(w_0))$, we conclude that
$$\ell_{w_0(\Pi_0)}(w)=|\hti(\ep+\tau)|+\sgn(\hti(\ep+\tau))\d_{\vep(w),1}.$$
\end{proof}

\section{\bf Affine Kac-Moody case}\setcounter{thm}{0}\label{akmc}

In this section, we show that our results about the length function of the Weyl group of a nullity 1 extended affine root system of type $A_1$ coincide with the results about classical length function on the  affine Weyl group, the Weyl group of an affine Kac-Moody root system of type $A_1$. 

In this section, we assume that $\nu=1$. Also, as in section \ref{wlf}, we assume that $A$ is a free abelian group of rank $\nu+1=2$. Thus $\Lam$ is a free abelian group of rank 1. In this case the only semilattice in $\Lam$ is $\Lam$ itself. So, there is only one extended affine root system of type $A_1$ in $A$, namely
$$\rtid=\Lam\cup(\pm\ep+\Lam).$$

By identifying  $\rtid$ with $\rtid\otimes1$ as a subset of $\v=\mathbb{R}\otimes_{\mathbb{Q}}A$, $\rtid$ is the affine Kac-Moody root system of type $A_1^{(1)}$ (see \cite[TABLE Aff1]{Ka} and \cite[Table 1.24]{ABGP}). Thus the $A_1$-type Weyl group of nullity 1, which is the Weyl group of $\rtid$, is the affine Weyl group of type $A_1$.

We have
$$\v=\mathbb{R}\epsilon\oplus\mathbb{R}\sigma_1\quad\mathrm{and}\quad \Lam=\mathbb{Z}\sigma_1$$
and
$$\rtid=\left\{k\epsilon+m\sigma_1\;|\;k\in\{0,\pm1\},\;m\in\mathbb{Z}\right\}.$$
Let $\alpha_0:=\epsilon$ and $\alpha_1:=-\epsilon+\sigma_1$. For $\alpha=k\epsilon+m\sigma_1\in\rtid$, we have $\alpha=(k+m)\alpha_0+m\alpha_1$. Since $k\in\{0,\pm1\}$ and $m\in\mathbb{Z}$, if $m$ is positive (resp. negative) then $k+m$ is non-negative (resp. non-positive). Thus, if $\uplus$ is the disjoint union, we have $\rtid=\rtid^+\uplus\{0\}\uplus\rtid^-$, where
\begin{equation}\label{R+}
\rtid^+=\{m\alpha_0+(k+m)\alpha_1\;|\;m\geq0\;\mathrm{if}\;k=1,\;m>0\;\mathrm{if}\;k\not=1\}
\end{equation}
and
\begin{equation}\label{R-}
\rtid^-=\{m\alpha_0+(k+m)\alpha_1\;|\;m\leq0\;\mathrm{if}\;k=-1,\;m<0\;\mathrm{if}\;k\not=-1\}.
\end{equation}
$\rtid^+$ is called the set of positive roots and $\rtid^-$ is called the set of negative roots of $\rtid$.

\begin{rem}
In the above decomposition of $\rtid$ into the sets of positive and negative roots, we have considered $\rtid$ as an affine Kac-Moody root system and used the classic definition of positive and negative roots. But, using Definition \ref{PNroot} of positive and negative roots for extended affine root systems of type $A_1$, we could get the same decomposition.
\end{rem}

By the proof of \cite[Proposition 6.5]{Ka}, for each $w\in\w$, there exist unique $n\in\mathbb{Z}$ and $s\in\{0,1\}$ where
\begin{equation}\label{affine unique exp}
w=w_\epsilon^s t_1^n,
\end{equation}
where $t_1=w_{\alpha_1}w_{\alpha_0}$. As in section \ref{wlf}, let $\Pi_0=\{\a_0,\a_1\}$. Then, from \cite[Exercise 3.6]{Ka}, $\ell_{\Pi_0}(w)$ for each $w\in\w$, is the number of positive roots which are mapped to negative roots by $w$. Using the above facts one finds a uniform formula for $\ell_{\Pi_0}$ as follows.

First, for $n,m\in\mathbb{Z}$ and $k\in\{0,\pm1\}$, we have
\begin{equation}\label{Wact1}
(w_{\alpha_1}w_{\alpha_0})^n((k+m)\alpha_0+m\alpha_1)=(m-(2n-1)k)\alpha_0+(m-2nk)\alpha_1.
\end{equation}

To show this identity, let $\beta=(k+m)\alpha_0+m\alpha_1$, then $\beta=k\epsilon+m\sigma_1$. By Lemma \ref{action lem}, we have
\begin{eqnarray*}
(w_{\alpha_1}w_{\alpha_0})^n(\beta) & = & (-1)^{2n}k\epsilon+m\sigma_1-2\sum_{i=1}^{|n|}(-(-1)^{2n-2i+1}k\sigma_1+(-1)^{2n-2i}k0)\\
& = & k\epsilon+m\sigma_1-2nk\sigma_1\\
& = & k\epsilon+(m-2nk)\sigma_1\\
& = & (m-(2n-1)k)\alpha_0+(m-2nk)\alpha_1.
\end{eqnarray*}

Next, as an easy consequence of (\ref{Wact1}), for $m,n\in\mathbb{Z}$ and $k\in\{0,\pm1\}$, we have
\begin{equation}\label{Wact2}
w_{\alpha_0}(w_{\alpha_1}w_{\alpha_0})^n((k+m)\alpha_0+m\alpha_1)=(m-(2n+1)k)\alpha_0+(m-2nk)\alpha_1,
\end{equation}

Now, we compute the length of elements of $\w$ with respect to $\Pi_0$. 
For $m\in\mathbb{Z}$ and $k\in\{0,\pm1\}$, Let $\alpha_{m,k}:=(m+k)\alpha_0+m\alpha_1$.

By (\ref{Wact1}), for $n\in\mathbb{Z}$, we have
$$t_1^n(\alpha_{m,k})=(m-2nk)\alpha_0+(m-2nk)\alpha_1=\alpha_{m-2nk,k}.$$
Without loss of generality, we may assume that $n>0$. Let $\alpha_{m,k}\in R^+$. By (\ref{R+}), we have $t_1^n(\alpha_{m,k})\in R^+$, for $k\in\{-1,0\}$. For $k=1$, $t_1^n(\alpha_{m,k})\in R^-$ if and only if $0\leq m\leq 2n-1$. Thus $\ell_{\Pi_0}(t_1^n)=2n$. Since $\ell_{\Pi_0}(w^{-1})=\ell_{\Pi_0}(w)$, for $w\in\w$, we conclude that
\begin{equation}\label{AffineLF1}
\ell_{\Pi_0}(t_1^n)=2|n|,\quad(n\in\mathbb{Z}).
\end{equation}

By (\ref{Wact2}), we have
$$w_\epsilon t_1^n(\alpha_{m,k})=(m-(2n+1)k)\alpha_0+(m-2nk)\alpha_1=\alpha_{m-2nk,-k}.$$

First, let $n\geq0$.
By (\ref{R+}), we have $w_\epsilon t_1^n(\alpha_{m,k})\in R^+$ for $k\in\{-1,0\}$. Also, for $k=1$, $w_\epsilon t_1^n(\alpha_{m,k})\in R^-$ if and only if $0\leq m\leq 2n$. Thus $\ell(w_\epsilon t_1^n)=2n+1$.
Now, let $n<0$. Again by (\ref{R-}), if $k\in\{0,1\}$ then $w_\epsilon t_1^n(\alpha_{m,k})\in R^+$. For $k=-1$, we have $w_\epsilon t_1^n(\alpha_{m,k})\in R^-$ if and only if $m+2n<0$. It means that $\ell(w_\epsilon t_1^n)=2|n|-1$. So, we have
\begin{equation}\label{AffineLF2}
\ell(w_\epsilon t_1^n)=|2n+1|,\quad(n\in\mathbb{Z}).
\end{equation}

By combining (\ref{AffineLF1}) and (\ref{AffineLF2}), we obtain the following unified formula for the length function $\ell_{\Pi_0}$. For $n\in\mathbb{Z}$ and $s\in\{0,1\}$,
\begin{equation}\label{AffineLFcor}
\ell(w_\epsilon^st_1^n)=2|n|+sgn(n)s.
\end{equation}

In Theorem \ref{uniexpspro}, we offered a unique expression for elements of the Weyl group of an affine reflection system of type $A_1$. For $\nu=1$, this unique expression is different from the unique expression in (\ref{affine unique exp}). It also seems that the offered formula for $\ell_{\Pi_0}$ in Theorem \ref{LFT} and (\ref{AffineLFcor}) are different. Here, we convert the two unique expressions to each other and show that the given formulas for $\ell_{\Pi_0}$ are actually the same.
\begin{eqnarray*}
&w_\ep t_1^n=w_{\ep+n\sg_1}\Longrightarrow\ell_{\Pi_0}(w_\ep t_1^n)=|\hti(\ep+n\sg_1)|=2n+1=2n+\sgn(n).\\
&w_\ep t_1^{-n}=w_{\ep-n\sg_1}\Longrightarrow\ell_{\Pi_0}(w_\ep t_1^{-n})=|\hti(\ep-n\sg_1)|=2n-1=2n+\sgn(n).\\
&t_1^n=w_\ep w_{\ep+n\sg_1}\Longrightarrow\ell_{\Pi_0}(t_1^n)=|\hti(\ep+n\sg_1)|-\sgn(\hti(\ep+n\sg_1)\\
&\quad\quad\quad\;\;=2n+1-1=2n.\\
&t_1^{-n}=w_\ep w_{\ep-n\sg_1}\Longrightarrow\ell_{\Pi_0}(t_1^{-n})=|\hti(\ep-n\sg_1)|-\sgn(\hti(\ep+n\sg_1)\\
&\quad\quad\quad\;\;=2n-1+1=2n.
\end{eqnarray*}
Thus Theorem \ref{LFT} coincides with (\ref{AffineLFcor}) for nullity 1.

In Definition \ref{RootBasis}, we defined a root basis for an extended affine root system of type $A_1$. It seems that this definition, for $\nu=1$, is different from the classical definition of a root basis for an affine Kac-Moody root system. We show that these two definitions are the same in this case. First let us recall the classic definition of a root basis.

\begin{DEF}\label{crootb}\cite[5.9]{Ka}
Let $R$ be an affine root system of type $A_1$. A $\mathbb{Z}$-basis $\Pi=\{\b_0,\b_1\}\subseteq R$ for $A$ is called a classic root basis for $R$ if each root $\a\in R$ can be written in the form $\a=\pm(n_0\b_0+n_1\b_1)$, where $n_0,n_1\in\mathbb{Z}_{\geq0}$.
\end{DEF}

It is obvious that a classic root basis for $R$ is a root basis with Definition \ref{RootBasis}. Recall from Section \ref{wlf}, that for $\nu=1$, we have
$$R=\{k\ep+n\sg_1\;|\;k\in\{0,\pm1\},\;n\in\mathbb{Z}\}.$$

\begin{lem}\label{ARbasis}
Any root basis is a classical root basis.
\end{lem}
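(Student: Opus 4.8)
The plan is to exploit the very explicit description of $R=\rtid$ for $\nu=1$, together with the fact that a root basis is a $\mathbb{Z}$-basis of $A$ whose elements lie in $R_b^\times=\rtid^\times=\pm\ep+\Lam$. Write the given root basis as $\Pi=\{\b_0,\b_1\}$ with $\b_i=\eta_i\ep+b_i\sg_1$, where $\eta_i=\sgn(\b_i)\in\{\pm1\}$ and $b_i\in\mathbb{Z}$. Since $\Pi$ is a $\mathbb{Z}$-basis of $A=\mathbb{Z}\ep\oplus\mathbb{Z}\sg_1$, the change-of-basis determinant $\eta_0 b_1-\eta_1 b_0$ must equal $\pm1$. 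The strategy is first to pin down the shape of $\b_0,\b_1$ from the two defining properties of a root basis, and then to read off the classical condition of Definition \ref{crootb} by a one-line computation.

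First I would show that the two signs are opposite. If instead $\eta_0=\eta_1$, the determinant condition forces $b_1-b_0=\pm1$, hence $\b_1-\b_0=\pm\sg_1$; since $\{\b_0,\b_1\}$ is a basis, the (unique) expansion of the strictly positive isotropic root $\sg_1$ in this basis would then be $\pm\b_1\mp\b_0$, which has a negative coefficient, contradicting Definition \ref{RootBasis}. Hence $\eta_0=-\eta_1$, and after relabeling I may assume $\sgn(\b_0)=1$ and $\sgn(\b_1)=-1$; this is harmless, since both Definition \ref{RootBasis} and Definition \ref{crootb} are symmetric in $\b_0,\b_1$. The determinant now reads $b_0+b_1=\pm1$, i.e.\ $\b_0+\b_1=\pm\sg_1$, and the same argument applied to $\sg_1$ rules out the minus sign: if $\b_0+\b_1=-\sg_1$ then $\sg_1=-\b_0-\b_1$ again violates the nonnegativity requirement. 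Therefore $\b_0+\b_1=\sg_1$, equivalently $b_1=1-b_0$.

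With this normalization the classical condition is immediate. For an arbitrary root $\a=k\ep+m\sg_1$ (with $k\in\{0,\pm1\}$ and $m\in\mathbb{Z}$) I would solve $\a=n_0\b_0+n_1\b_1$ by matching the $\ep$- and $\sg_1$-components, obtaining $n_0=m+kb_1$ and $n_1=m-kb_0$, so that $n_0-n_1=k(b_0+b_1)=k\in\{0,\pm1\}$. Thus $n_0$ and $n_1$ differ by at most one; two integers differing by at most one always satisfy $n_0n_1\ge0$, so they are both $\ge0$ or both $\le0$. In the first case $\a=n_0\b_0+n_1\b_1$ has nonnegative coefficients, and in the second $-\a=(-n_0)\b_0+(-n_1)\b_1$ does, giving $\a=\pm(n_0\b_0+n_1\b_1)$ with nonnegative integers exactly as required.

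The only real content is the structural reduction of the second paragraph, which isolates $\b_0+\b_1=\sg_1$ from the root-basis axioms via the interplay of the determinant condition with the nonnegativity of the expansion of $\sg_1$. Once that is in hand, the verification of the classical condition is the elementary observation that the two coordinates of any root in this basis are consecutive (or equal) integers. Accordingly, I expect the sign-and-determinant bookkeeping in the reduction step to be the main, though still routine, obstacle.
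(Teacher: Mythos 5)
Your proposal is correct and takes essentially the same route as the paper's proof: both arguments first force $\sgn(\b_0)=-\sgn(\b_1)$ and $\b_0+\b_1=\sg_1$ from the basis property together with the nonnegativity of the expansion of $\sg_1$, and then write an arbitrary root $k\ep+m\sg_1$ in the basis and observe that its two coefficients (your $n_0=m+kb_1$, $n_1=m-kb_0$, the paper's $n+k(1-m)$ and $n-km$) differ by $k\in\{0,\pm1\}$, hence are both nonnegative or both nonpositive. The only cosmetic difference is that you derive the normalization step from the unimodularity of the change-of-basis matrix, whereas the paper obtains it by applying $\sgn$ and $p$ to the nonnegative expansion of a nonzero isotropic root; this is a minor variation, not a different method.
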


\begin{proof}
Let $\Pi=\{\b_0,\b_1\}$ be a root basis for $R$. We show that any $\a\in R$ can be written in a form $\a=\pm(m_0\b_0+m_1\b_1)$, where $m_0,m_1\geq0$.

Let $\sg\in R^0\setminus\{0\}$. Then $\sg=n\sg_1$, for $n\in\mathbb{Z}$. Thus each isotropic root in $R$ is strictly positive or strictly negative. Since $\Pi$ is a root basis, we  have $\sg=\pm(m_0\b_0+m_1\b_1)$, for $m_0,m_1\geq0$. 
Since $\sg\not=0$ is isotropic, we have $m_0,m_1>0$. Now we have
$$\pm(m_0\b_0+m_1\b_1)=\sg=p(\sg)=\pm(m_0p(\b_0)+m_1p(\b_1)).$$
Thus $$m_0(\b_0-p(\b_0))+m_1(\b_1-p(\b_1))=0.$$
For $i=0,1$, we have $\b_i-p(\b_i)=\sgn(\b_i)\ep$. Therefore $m_0\sgn(\b_0)+m_1\sgn(\b_1)=0$. Since $\sgn(\b_0),\sgn(\b_1)\in\{\pm1\}$ and $m_0,m_1>0$, we have $\sgn(\b_0)\sgn(\b_1)=-1$ and $m_0=m_1$. Without loss of generality, we may assume that $\sgn(\b_0)=1$ and $\sgn(\b_1)=-1$. Now, for $m\in\mathbb{Z}$, we have
$$m\b_0+m\b_1=mp(\b_0)+mp(\b_1)\in R^0.$$ Thus $\sg_1=\b_0+\b_1$.

With our assumptions, we have $\ep=\b_0-p(\b_0)$. Let $p(\b_0)=m\b_0+m\b_1$, for $m\in\mathbb{Z}$. Then for $k\ep+n\sg_1\in R$, where $k\in\{0,\pm\}$ and $n\in\mathbb{Z}$, we can write
$$k\ep+n\sg_1=(n+k(1-m))\b_0+(n-km)\b_1.$$
If $n-km>0$ then $n-km+k>k$ and if $n-km<0$ then $n-km+k<k$. This completes the proof.
\end{proof}

\centerline{\bf Acknowledgements}
{This research was in part supported by a grant from IPM (No. 90170217).
The authors also would like to thank  the Center of Excellence for Mathematics,
University of Isfahan.}

\end{document}